\theoremstyle{plain}
\newtheorem{theorem}{Theorem}[section]
\newtheorem{lemma}[theorem]{Lemma}
\newtheorem{proposition}[theorem]{Proposition}
\newtheorem{corollary}[theorem]{Corollary}
\newtheorem{algorithm}[theorem]{Algorithm}
\numberwithin{equation}{section}
\theoremstyle{theorem}
\newtheorem{remark}{Remark}[section]
\newtheorem{assumption}{Assumption}[section]
\def\Fto{{}_2F_1}
\def\cM{{ M}}
\def\dH{\dot{H}}
\def\calC{\mathcal C}
\def\Forall{\qquad \hbox{for all }}
\def\beq#1\eeq{\begin{equation} #1 \end{equation}}
\def\bal#1\eal{\begin{aligned} #1 \end{aligned}}
\def\RR{{\mathbb R}}
\def\Lnt{L}
\def\Kind{{K}}
\def\hu{\hat{u}}
\def\sumj{\sum_{j=1}^\infty}
\chardef\atsign='100
\def\calA{{\mathcal A}}
\def\calB{{\mathcal B}}
\def\calI{{\mathcal I}}
\def\calT{{\mathcal T}}
\def\ff{ v}
\newcommand{\INDSTATE}[1][1]{\STATE\hspace{#1\algorithmicindent}}
\title[Approximation of Fractional Powers of Elliptic Operators]{Numerical Approximation of Fractional Powers of Elliptic Operators} 
\author{Beiping Duan
	\and Raytcho D.~Lazarov
	\and Joseph E.~Pasciak
}
\address{Beiping Duan, School of Mathematics and Statistics, Central South University, 410083 Changsha, P.R. China and
Department of Mathematics, Texas A\&M University, College Station, TX 77843-3368, USA}
\email{duanbeiping@hotmail.com}
\address{Raytcho Lazarov, Department of Mathematics, Texas A\&M University, College Station,
TX~77843-3368 and Institute of Mathematics and Informatics, Bulgarian Academy of Sciences, acad. G. Bonchev str., blok 8,
1113 Sofia, Bulgaria.}
\email{lazarov\atsign math.tamu.edu}
\address{Joseph E. Pasciak, Department of Mathematics, Texas A\&M University, College Station,
TX~77843-3368.}
\email{pasciak\atsign math.tamu.edu}
\date{Started May 2017, today is \today}
\begin{document}


\begin{abstract} 
	In this paper, we develop and study algorithms  for approximately solving the linear algebraic systems: 
	$\mathcal{A}_h^\alpha u_h = f_h$, $ 0< \alpha <1$, for $u_h, f_h \in
	V_h$ with  $V_h$ a finite element approximation space.
	Such problems arise in finite element or finite difference
	approximations of the  
	problem $ \calA^\alpha u=f$ with $\calA$, for example, coming from   
	a second order elliptic operator  with homogeneous boundary
	conditions. 
	The algorithms are motivated by  the method of Vabishchevich
	\cite{Vabishchevich2015JCP} that relates the algebraic problem to a
	solution of a time-dependent initial value problem on the interval
	$[0,1]$.   
	Here we develop and study two time stepping schemes 
	based on diagonal Pad\'e approximation to $(1+x)^{-\alpha}$.
	The first one uses geometrically graded meshes in order to compensate
	for the singular behavior of the solution for $t$ close to $0$. 
	The second algorithm uses uniform time stepping  
	but requires smoothness of the data $f_h$ in discrete norms. 
	For both methods,  we estimate the error in terms of the number of time
	steps, with the regularity of $f_h$ playing a major role for the second method. 
	Finally, we present numerical experiments for $\calA_h$ coming from the
	finite element approximations of  second order elliptic boundary value problems in 
	one and two spatial dimensions. 
\end{abstract}



\keywords{fractional powers of elliptic operators, finite element approximation, Pad\'e approximation, 
solution methods for equations involving powers of SPD matrices}

\subjclass[2010]{Primary 35S15, 65R20, 65N12, 65N50, 65N30.}

\maketitle


\section{Introduction}
\subsection{Motivation and problem formulation}
%
Nonlocal operators arise in a wide variety  of mathematical
models such as modes of long-range interaction in elastic deformations
\cite{silling2000reformulation}, nonlocal electromagnetic fluid flows \cite{mccay1981theory}, 
image processing \cite{gilboa2008nonlocal,pu2010fractional} and many more.
A recent discussion about the properties of such models and their applications to chemistry, 
geosciences, and engineering can be found in \cite{KilbasSrivastavaTrujillo:2006,metzler2014anomalous}.

The nonlocal operators considered in this 
paper involve fractional powers of operators $\calA$ associated with
second order elliptic equations  in bounded domains with homogeneous 
Dirichlet boundary conditions. The fractional power of $\calA$  
is defined through the Dunford-Taylor integral, \cite{kato1961,lunardi}, 
which is equivalent to the definition by the spectrum of $\calA$. For a detailed
discussion about this setting and other possible ways to define fractional powers of
the Laplacian (and more general elliptic operators) we refer to \cite{Bonito2018,kwasnicki2017ten,Karniadakis2018fractional}.  
We focus on issue of solving the corresponding algebraic system 
that arises in approximating such operators by the finite element method, 
e.g. \cite{Bonito2018, BP-frac,wenyu-thesis}.

We begin with the definition of the fractional power of a second order elliptic operator 
in a bounded domain $\Omega \subset \RR^d$, $d=1,2,\ldots$ with a Lipschitz continuous boundary.
On  $V \times V$, with $V=H^1_0(\Omega)$,  we consider the bilinear form:
\beq\label{bi-form}
A(w,\phi)= \int_\Omega  \Big (a(x) \nabla w \cdot\nabla \phi +q(x) w \phi \Big )\, dx, 
\eeq
and assume that the coefficients are such that  the bilinear form is  coercive  and bounded on $V$. 
 Further, we define $\calT:L^2(\Omega) \rightarrow V$, where $\calT v=w \in V$ is the unique solution to
\beq A(w,\phi)=(v,\phi),\qquad \phi\in V.
\label{weak}
\eeq
Here $(\cdot,\cdot)$ denotes the $L^2(\Omega)$-inner product.

Following \cite{kato1961}, we define an unbounded operator $\calA$ with
domain of definition $D(\calA)$ being the image of $\calT$ on
$L^2(\Omega)$ and set $\calA u= \calT^{-1} u$ for $u\in D(\calA)$.
This is well defined as $\calT$ is injective on $L^2(\Omega)$.
Negative fractional powers can be defined by  Dunford-Taylor integrals,
i.e., for $\alpha>0$ and $v\in L^2(\Omega)$,
\[
\calA^{-\alpha}v = \frac 1 {2 \pi i} \int _{\calC} z^{-\alpha}
R_z(\calA)v
\, dz
\] 
where $R_z(\calA)=(\calA-z \calI)^{-1}$ is the resolvent operator 
and $\calC$ is an appropriate contour in the complex plane (see, e.g., \cite{lunardi}).

Equivalently, fractional powers for the above example
can be defined by eigenvector expansions.
As $\calT$ is a
compact, symmetric and positive  definite operator on $L^2(\Omega)$, its eigenpairs
$\{\psi_j,\mu_j\}$, for $j=1,2,\ldots,\infty$, with suitably
normalized eigenvectors, provide an orthonormal basis
for $L^2(\Omega)$.  We also set $\lambda_j=\mu_j^{-1}$.   For $\alpha\ge 0$
and $v\in L^2(\Omega)$, 
\[
\calA^{-\alpha} v= \sum_{j=1}^\infty   \mu_j^\alpha (v,\psi_j) \,\psi_j.
\]
Positive fractional powers of $\calA$ are also given by similar series.
For $\alpha \ge 0$, define
\[
D(\calA^\alpha):= \{ v \in L^2\  : \ \sumj  \lambda_j^{2\alpha}
|(v,\psi_j)|^2<\infty\}
\]
and
\[
\calA^\alpha  v:= \sumj \lambda_j^\alpha (v,\psi_j) \,\psi_j \   \mbox{ for } \   v \in D(\calA^\alpha).
\]

We consider the  fractional order elliptic equation:  Find $u\in D(\calA^\alpha)$ satisfying
\beq\label{fracprob}
\calA^\alpha u = f \ \ \mbox{for } \ \ f\in L^2(\Omega)
\eeq
and note that its solution is given by
\[
u= \calT^\alpha  f:=\sumj \mu_j^\alpha (f,\psi_j)\, \psi_j.
\] 

Our goal is to approximate $u$ by using finite element or finite differences.
The finite element approximation  on $V_h \subset V$ is based on the 
discrete solution operator $\calT_h:V_h\rightarrow V_h$ defined  by 
$\calT_hv_h:=w_h$ where $w_h$ is the unique function in
$V_h$ satisfying
\[
A(w_h,v_h)=(v_h,v_h),\Forall v_h\in V_h.
\]
The inverse of $\calT_h$ is denoted by $\calA_h$ and satisfies
\[
(\calA_h w_h,v_h)=A(w_h,v_h),\Forall v_h\in V_h.
\]

We obtain a ``semi-discrete'' approximation to $u$ of equation
\eqref{fracprob}
by defining  
\beq \label{algebraic}
u_h =\calT_h^\alpha \pi_h f := \calA_h^{- \alpha} \pi_h f,
\eeq
where 
$\pi_h$ is the $L^2(\Omega)$-orthogonal projection into $V_h$.  Note that $u_h$
can be expanded in the $L^2(\Omega)$-orthogonal  eigenfunctions of $\calT_h$, i.e.,  if
$\{\psi_{h,j},\mu_{h,j}\}$, for $j=1,\ldots, M$, denotes the eigenpairs and
$M$ is the dimension of $V_h$ then
\[
\pi_h f =\sum_{j=1}^{M} (f,\psi_{h,j})\, \psi_{h,j}
\]
and 
\beq u_h=\sum_{j=1}^{M} \mu_{h,j}^\alpha  (f,\psi_{h,j})\, \psi_{h,j}.
\label{seriesh}
\eeq
In this paper, we shall study a technique for approximating the solution
to \eqref{algebraic} which avoids computing the eigenvectors and
eigenvalues of $\calT_h$.

We note that the technique to be developed can be applied to finite
difference approximations as well.   
In this case, the discrete space is a finite dimensional space
of grid point values and $\pi_h f$ is replaced by the interpolant of $f$
at the grid nodes.  
The matrix $\calT_h$ comes from applying finite difference
approximations to the derivatives in the strong form (see,
\eqref{strong}) of problem \eqref{weak}.  Its scaling is not arbitrary   if we
expect $\calT_h^\alpha \pi_h f$ to converge to the grid point values of $f$.
This, in turn, implies the proper scaling for the discrete norms.

It has been shown in  \cite[Theorem 4.3]{BP-frac} that, if the operator $\calT$ 
satisfies elliptic regularity pickup with index 
$s \in  (0,1]$ (see Assumption \ref{assume1})
then  for appropriate $f$,
\[
\| u - u_h \| =\|\calT^{\alpha}f- \calT_h^{\alpha} \pi_h f\|  =O(h^{2s}).
\]  
See Section \ref{FEM} for more details. 
Here $\|\cdot\|$ denotes the $L^2(\Omega)$-norm.

Obviously, the discrete operator $\calA_h$ is symmetric and 
positive definite and the corresponding  matrix is full. 
We note that problems involving
finding approximations of  $\calA_h^{1/2}$, cf. \cite{Higham1997},
evaluating the sign function of $\calA_h$, cf. \cite{Kenney1991}, 
and other related functions of matrices have a long history in numerical linear algebra. 

\subsection{The idea of the method of Vabishchevich}

In this paper, we develop and study a method for approximating the
solution \eqref{algebraic}.  Our proposed 
method is related to an  idea of P. Vabishchevich,
\cite{Vabishchevich2015JCP}, which exhibits  $u_h$
as a solution of a special time dependent problem. Now we 
briefly explain the idea of his method.

We start with the
observation that the unique solution $\hu(t)$ to the ordinary differential
equation initial value problem,
\beq\bal 
\hu_t+ \frac {\alpha (\lambda-\delta)} {\delta +t (\lambda-\delta)} \hu = 0, \ \ \ 
\hu(0) = \delta^{-\alpha} \hat{v}
\eal
\label{ode}
\eeq
with $0<\delta<\lambda$ and $\alpha \ge 0$, is given by
\beq
\hu (t) = (\delta + t(\lambda-\delta))^{-\alpha}  \hat{v}
\label{i.0}
\eeq
and hence
\beq \hu (1)= \lambda^{-\alpha} \hat{v}.
\label{i.1}
\eeq
Note also that for $k>0$,
\beq
\hu(t+k) = [1+k(\lambda-\delta)/(\delta +t (\lambda-\delta))]^{-\alpha} \hu(t).
\label{i.2}
\eeq

Now suppose that the spectrum of $\calA_h$ is contained
in the interval $[\lambda_1, \lambda_M  ]$ 
with $\lambda_1>0$.   
Let $0<\delta<\lambda_1$ and set $\calB=\calA_h-\delta \calI$. We
consider the vector  valued ODE:  Find $U(t):[0,1]\rightarrow V_h$ satisfying 
\beq\bal 
U_t + \alpha \calB (\delta \calI+t \calB )^{-1} U & = 0,\\
U(0)  & =  \delta^{-\alpha} f_h 
\eal
\label{OP-ode}
\eeq
where $f_h:=\pi_h f$.  Expanding the solution to \eqref{OP-ode} as 
$$U(t)= \sum_{j=1}^{M} c_j(t) \psi_{j,h},$$ 
we find that $c_j(t)$ solves \eqref{ode} with $\lambda=\lambda_{j,h}$. 
Moreover, it follows from \eqref{i.0} and \eqref{i.2} that  
\beq
U(t):=(\delta \calI + t \calB)^{-\alpha}f_h \label{u(t)}
\eeq 
and 
\beq
U(t+k) = (\calI+k \calB (\delta \calI+t\calB)^{-1})^{-\alpha} U(t).
\label{evo}
\eeq
As proposed by
Vabischchevich \cite{Vabishchevich2015JCP}, it is then natural to consider numerical approximations to
\eqref{OP-ode} based on a time stepping method.

In \cite{Vabishchevich2015JCP}, Vabishchevich proposed a time stepping scheme based on
the backward Euler method and applied it to approximate fractional
powers of a discrete approximation $\calA_h$ 
of the Laplace
operator with homogeneous Dirichlet boundary conditions. The results of
numerical computations illustrating the accuracy, convergence, and
some theoretical aspects of the method were provided.


In this paper, we  take a different but related approach.   Instead
of approximating the solution of \eqref{OP-ode}, we simply
approximate the function $U(t_i)$ given by \eqref{u(t)} 
on an increasing sequence of nodes $0=t_0<t_1<\cdots< t_\Kind=1$.   
We start from the recurrence \eqref{evo}.
The ``time stepping'' methods that we
shall study  are based on
diagonal Pad\'e approximation to $(1+x)^{-\alpha}$, i.e.,
\beq
(1+x)^{-\alpha}\approx r_m(x) := \frac {P_m(x)}{Q_m(x)}
\label{pade}
\eeq
with $P_m$ and $Q_m$ being polynomials of degree $m$ and $Q_m(0)=1$.
The polynomials $P_m$ and $Q_m$ are then uniquely defined by requiring
that the first $2m+1$ terms of the Maclauren expansion of 
\[
(1+x)^{-\alpha}-r_m(x) 
\]
vanish.   The method that we
study and analyze is  given by setting  $U_0:=u(0)=\delta^{-\alpha}f_h$ and applying the recurrence
\beq
U_{l}=r_m(k_l \calB(\delta \calI+t_{l-1} \calB )^{-1}) U_{l-1},\quad
l=1,2,\ldots,\Kind
\label{recur}
\eeq
with $k_l=t_l-t_{l-1}$.  Here $U_l$ is our approximation of $U(t_l)$
so that $U_K $ approximates $U(1)=u_h$.
We shall see that these methods are unconditionally stable for
$m=1,2,\ldots$
and $\alpha \in (0,1)$.

Even though we take a different point of view, we are still solving \eqref{OP-ode}, 
as suggested by P. Vabishchevich, \cite{Vabishchevich2015JCP}.
It is important to note that although Problem \eqref{OP-ode} appears
harmless, it behaves considerably different than, for example, the classical parabolic
problem:
\beq
w_t + \calA_h w = 0.
\label{clas}
\eeq
For example,  if $w(0)= \psi_{j,h}$ then $w(1)= e^{-\lambda_{j,h}}  \psi_{j,h}$
while if $f_h=\psi_{j,h}$, the solution of \eqref{OP-ode}  is
$u(1)=\calA_h^{-\alpha} \psi_{j,h}  =\lambda_{j,h}^{-\alpha}\psi_{j,h}$.  This means that initial
time step errors in the high frequency components for our problem have much
stronger effect on the accuracy of the final solution. This is especially important
for problems whose solutions have minimal regularity. 

\subsection{Our contributions}

In this paper, we consider two time stepping schemes, one involving mesh
refinement near $t=0$ and the other using a fixed time step.
In both cases, we shall be using \eqref{recur}  to define our
solution but on different meshes in time.

The refinement scheme starts with an initial basic mesh with $t_0=0$, $t_i=2^{i-1-L} $, for
$i=1,2,\ldots L+1$ with $L$ chosen so that
$2^{-L}<(\lambda_M)^{-1}$. Subsequent finer meshes are defined by
partitioning each of the above intervals into $N$ equally spaced subdivisions.   The
refinement scheme leads to an error estimate
\[
\|\calA_h^{-\alpha}f_h -U_{N(L+1)}
\| \le C N^{-2m}  \|f_h\|
\] 
for the Pad\'e scheme  based on $r_m(x)$.    

The second scheme that we study is the simpler one using a fixed step
size $k_N=1/N$.  In this case, we obtain the error estimate
\beq
\|\calA_h^{-\alpha}f_h -U_{N}  
\| \le 
C k_N^{\alpha+\gamma}\|A_h^{\gamma}f_h\|,\quad \hbox{ for }0\le
\gamma \le 2m-\alpha.
\label{equalsp}
\eeq

It is clear that $L$ in the first method grows like the logarithm of 
$\lambda_M$ so that more steps are required by the refinement scheme
when the same $N$ is used in both.   However, in all of our numerical
examples, if one adjusts the values of $N$ in both schemes to obtain the
same absolute convergence, the refinement scheme requires less steps
overall. 

The question of when the norm on the right hand side of \eqref{equalsp}
can be controlled by natural norms on the data $f$ is open.  Although,
such a result for $\gamma \le 1$ was provided in \cite{wenyu-thesis},
the result for larger $\gamma$ is not known even in the finite element
case. We discuss this in more detail in Section \ref{FEM}. In fact, our numerical results 
in  Section \ref{numer}  suggest that the result is not true in general.

\section{Pad\'e Approximations}   

In this section, we develop diagonal Pad\'e
approximations to  $(1+x)^{-\alpha}$ for $\alpha\in (0,1)$ based on the
classical theory of Pad\'e approximations given by Baker \cite{Baker1975}.

Our approximations are of the form of \eqref{pade} with $m=1,2,\ldots$
and we shall write down explicit formula for the polynomials $P_m(x)$
and $Q_m(x)$. The starting point is the formula  \cite[relation (5.2)]{Baker1975}
or \cite[formula (2.1)]{Kenney1989}:
\[ 
(1+x)^{-\alpha} = \Fto (\alpha,1;1;-x).
\]
Here $\Fto (a,b;c;x)$ denotes the hypergeometric function defined by
\[
\Fto (a,b;c;x)=\sum_{j=0}^\infty \frac {(a)_j (b)_j} {j! (c)_j} x^j.
\]
Here $(a)_0=1$ and $(a)_j=a(a+1)\cdots(a+j-1)$ for $j>0$.
This series converges for $|x|<1$ provide that $c$ is not in
$\{0,-1,-2,\ldots\}$.  

Baker, \cite[formula (5.12)]{Baker1975}, also gives an explicit expression 
for the denominator:
\beq \bal 
Q_m(x)&= \Fto(-m,-\alpha-m; -2m;-x)\\
&:=\sum_{j=0}^m \frac
{(-m)_j\, (-\alpha-m)_j} 
{j!\, (-2m)_j} (-x)^j 
= 1 +\sum_{j=1}^m  a_j b_j(\alpha) x^j.
\eal
\label{den}
\eeq
Here $b_0(\alpha)=a_0=1$,
\[
b_j(\alpha)= (m+\alpha)((m-1)+\alpha)\cdots((m+1-j)+\alpha)
\]
and
\[a_j=\frac {m(m-1)\cdots(m+1-j)}{j! (2m(2m-1)\cdots(2m+1-j)}\qquad
\hbox{for }j=1,2,\ldots,m.
\]

Now, Theorem 9.2 of \cite{Baker1975} implies that $Q_m(x)/P_m(x)$ is the
diagonal Pad\'e approximation to $(1+x)^{\alpha}$ and again applying
(5.12) of \cite{Baker1975}, we find that 
\beq\bal 
P_m(x)&={}_2F_1(-m,\alpha-m; -2m;-x)\\
&:=\sum_{j=0}^m \frac {(-m)_j
	(\alpha-m)_j} {j! (-2m)_j} (-x)^j 
= 1 +\sum_{j=1}^m  a_j b_j(-\alpha) x^j.
\eal
\label{num}
\eeq
Using the above formulas, we find, for example:
\[
r_1(x)= \frac {1 + [(1-\alpha)/2]x }{ 1 + [ (1+\alpha)/2 ]x  }
\]
and 
\[
r_2(x)=\frac { 1 + [ (2-\alpha)/2]x +[(2-\alpha)(1-\alpha)/12]x^2 } { 1 +
	[(2+\alpha)/2]x+ [(2+\alpha)(1+\alpha)/12]x^2 }.
\]

For our further considerations, we need to discuss a relation
between the  denominators appearing in Pad\'e approximations and
orthogonal polynomials with respect to an appropriate weight $w$.  
We first note the series expansion of 
\[
(1-x)^{-\alpha} = \sum_{i=0}^\infty \frac {(\alpha)_i\, x^i} {i!}:=
\sum_{i=0}^\infty c_i x^i.
\]
The coefficients above satisfy
\beq \bal 
c_i = \frac 1 {\Gamma(\alpha)\Gamma(1-\alpha)} \int_0^1 x^i
\bigg ((1-x)^{-\alpha} x^{\alpha-1} \bigg ) \, dx 
:= \frac 1 {\Gamma(\alpha)\Gamma(1-\alpha)} \int_0^1 x^i
w(x)  \, dx, 
\eal
\label{coef}
\eeq
where $ w(x)=(1-x)^{-\alpha}x^{\alpha-1}$. By (7.7) of
\cite{Baker1975}, utilizing the fact that $P_m(-x)/Q_m(-x)$ is  the
Pad\'e approximation of $(1-x)^{-\alpha}$, we obtain that the denominator $Q_m(x)$ can be expressed by 
\[Q_m(x)=(-x)^m q_m(-1/x)\]
where $q_m$ is the monic polynomial of order $m$ which is orthogonal to the set of polynomials 
of degree less than $m$ with the weight $xw(x)$. As the roots of $q_m(x)$ are all in the interval $(0,1)$, those of
$Q_m(x)$ are in the interval $(-\infty,-1)$.

The expressions for the numerator and denominator in $r_m(x)$ imply the following
proposition.

\begin{proposition}  \label{p1}
	Let $\alpha$ be in $(0,1)$ and $m$ be a positive integer.
	Then, there are positive constants $\rho_m$ 
	satisfying
	\beq
	\rho_m\le r_m(x)\le 1,\qquad \hbox{for all }x\ge 0.
	\label{prop11}
	\eeq
\end{proposition}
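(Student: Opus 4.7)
The plan is to exploit the explicit formulas \eqref{den} and \eqref{num} and establish positivity of all coefficients of $P_m$ and $Q_m$, then compare them termwise, and finally use continuity plus the behavior at infinity to get the uniform lower bound.

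First I would check that for $\alpha\in(0,1)$ and $1\le j\le m$ the coefficients $a_j$ and $b_j(\pm\alpha)$ are all strictly positive. For $a_j$ this is immediate since both numerator and denominator are products of positive integers. For $b_j(\alpha)=\prod_{l=0}^{j-1}(m-l+\alpha)$ every factor is obviously positive; for $b_j(-\alpha)=\prod_{l=0}^{j-1}(m-l-\alpha)$ the smallest factor (at $l=j-1\le m-1$) is $(m+1-j)-\alpha\ge 1-\alpha>0$, which is exactly where the hypothesis $\alpha<1$ is used. Hence both $P_m(x)$ and $Q_m(x)$ have strictly positive coefficients, and in particular $P_m(x)>0$ and $Q_m(x)>0$ for every $x\ge0$, so $r_m$ is continuous and positive on $[0,\infty)$.

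Next, the upper bound $r_m(x)\le 1$ reduces to showing $P_m(x)\le Q_m(x)$ for $x\ge 0$, and since $a_j>0$ and $x^j\ge 0$, it is enough to prove termwise that $b_j(-\alpha)\le b_j(\alpha)$. This follows immediately from comparing the two products factor by factor: $(m-l-\alpha)<(m-l+\alpha)$ for every $l$ since $\alpha>0$, and all factors on both sides are positive. Equality occurs only at $x=0$, where $r_m(0)=1$.

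For the lower bound, I would compute the limit at infinity. Because $P_m$ and $Q_m$ both have degree exactly $m$ with positive leading coefficients,
\[
\lim_{x\to\infty}r_m(x)=\frac{a_m b_m(-\alpha)}{a_m b_m(\alpha)}=\prod_{k=1}^{m}\frac{k-\alpha}{k+\alpha},
\]
which is a strictly positive number because $\alpha\in(0,1)$. Combined with the continuity and strict positivity of $r_m$ on $[0,\infty)$ from the first step, $r_m$ attains a positive minimum on any compact subinterval and stays bounded away from zero near infinity; setting $\rho_m:=\inf_{x\ge0}r_m(x)$ gives $\rho_m>0$ and finishes the proof.

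I do not expect a real obstacle here; the only subtlety is the sign-tracking in $b_j(-\alpha)$ which is precisely where the constraint $\alpha<1$ enters, and being careful that the lower bound at infinity is computed from the \emph{leading} coefficients rather than the value $r_m(0)=1$.
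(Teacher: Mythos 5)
Your proof is correct and follows essentially the same route as the paper: termwise comparison of the coefficients $a_j b_j(-\alpha)\le a_j b_j(\alpha)$ for the upper bound, and continuity plus the positive limit $b_m(-\alpha)/b_m(\alpha)$ at infinity for the lower bound. The only cosmetic difference is that you justify positivity of $Q_m$ on $[0,\infty)$ directly from the positivity of its coefficients, whereas the paper invokes the fact that the roots of $Q_m$ lie in $(-\infty,-1)$; both are valid.
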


\begin{proof}
	As $\alpha$ is in $(0,1)$, 
	$$
	0<b_j(-\alpha)<b_j(\alpha). $$
	This means for $x\ge 0$, 
	$a_j b_j(-\alpha)x^j\le a_j b_j(\alpha) x^j$ 
	with strict inequality when $x>0$.  The second inequality of  \eqref{prop11} follows by summation. 
	
	For the first inequality in \eqref{prop11}, we note that both $Q_m(x)$ and $P_m(x)$ are positive for
	$x\ge 0$ and 
	\[
	 \lim_{x\rightarrow \infty} r_m(x) = b_m(-\alpha)/b_m(\alpha)>0.
	 \]
	The first inequality in \eqref{prop11} immediately follows from the fact
	that $r_m(x)$ is continuous on $[-1,\infty)$ since all of the roots of
	$Q_m(x)$ are in $(-\infty,-1)$ and  takes values in $(0,1]$ for $x\ge 0$. 
\end{proof}

To clarify further the convergence of these approximations, we include
the following proposition.

\begin{proposition} \label{p2}  
	For $0\le s \le 2m+1$,
	\beq
	|(1+x)^{-\alpha} -r_{m}(x)| \le c_{m,s}   x^{s} \quad
	\hbox{for }x\in [0,\infty)\label{prop23}
	\eeq
	where 
	$$c_{m,s}=\max \{ Q_m(-1) 2^{s-2m},2^{1+s}\}.$$
\end{proposition}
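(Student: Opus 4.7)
The plan is to split the half-line $[0,\infty)$ into the ``large'' regime $x\ge 1/2$ and the ``small'' regime $x\in[0,1/2]$; the threshold $x=1/2$ is calibrated so that the two pieces match the two terms in the $\max$ defining $c_{m,s}$ exactly.

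In the large regime, Proposition \ref{p1} and the elementary bound $(1+x)^{-\alpha}\le 1$ give $|(1+x)^{-\alpha}-r_m(x)|\le 2$ via the triangle inequality. Since $s\ge 0$ and $x\ge 1/2$ imply $x^s\ge 2^{-s}$, this yields $|(1+x)^{-\alpha}-r_m(x)|\le 2\cdot 2^s\cdot x^s=2^{1+s}x^s$, producing the second term in $c_{m,s}$.

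In the small regime I would exploit the defining property of the diagonal Pad\'e approximant. Writing $(1+x)^{-\alpha}-r_m(x)=N(x)/Q_m(x)$ with $N(x):=(1+x)^{-\alpha}Q_m(x)-P_m(x)$, we have $N(0)=N'(0)=\cdots=N^{(2m)}(0)=0$ by construction, and $Q_m(x)\ge Q_m(0)=1$ on $[0,\infty)$ because every coefficient of $Q_m$ is positive (cf. \eqref{den}). The crux is to establish the quantitative bound
\[
|N(x)|\;\le\; 2\,Q_m(-1)\,x^{2m+1} \qquad \text{on } [0,1/2].
\]
Granting this, for $0\le s\le 2m+1$ and $x\in[0,1/2]$ the elementary inequality $x^{2m+1-s}\le (1/2)^{2m+1-s}=2^{s-2m-1}$ gives
\[
|(1+x)^{-\alpha}-r_m(x)|\;\le\; |N(x)|\;\le\; Q_m(-1)\,2^{s-2m}\,x^s,
\]
which is the first term in $c_{m,s}$. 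Taking the maximum of the two regimes yields the stated bound.

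The main obstacle will be the quantitative estimate on $|N(x)|$ with the specific factor $Q_m(-1)$. Since $P_m^{(j)}\equiv 0$ for $j>m$, a natural route is the Taylor integral remainder
\[
N(x)=\frac{1}{(2m)!}\int_0^x (x-t)^{2m}\frac{d^{2m+1}}{dt^{2m+1}}\bigl[(1+t)^{-\alpha}Q_m(t)\bigr]\,dt,
\]
after which one bounds the $(2m+1)$-th derivative of $(1+t)^{-\alpha}Q_m(t)$ on $[0,1/2]$ via Leibniz: only indices $k\in\{m+1,\dots,2m+1\}$ survive, the factors $(\alpha)_k$ appearing in $[(1+t)^{-\alpha}]^{(k)}$ combine with the coefficients $Q_m^{(2m+1-k)}(0)$, and the resulting finite sum should, via the explicit formulas \eqref{den}--\eqref{num}, reassemble into precisely $Q_m(-1)$ up to a small numerical factor. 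Identifying this combinatorial identity is the delicate step; the remainder of the argument is routine bookkeeping of powers of two.
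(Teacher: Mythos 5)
Your treatment of the regime $x\ge 1/2$ is complete and coincides with the paper's: Proposition \ref{p1} together with $0<(1+x)^{-\alpha}\le 1$ gives the crude bound $2\le 2^{1+s}x^{s}$, which is the second term in $c_{m,s}$. The structural reduction for $x\in[0,1/2]$ is also sound --- $Q_m(x)\ge Q_m(0)=1$ for $x\ge 0$ since all coefficients in \eqref{den} are positive, so everything hinges on $|N(x)|\le 2\,Q_m(-1)\,x^{2m+1}$. But that estimate \emph{is} the nontrivial content of the proposition, and you have asserted it rather than proved it; you say as much when you call the ``combinatorial identity'' the delicate step. The order-$(2m+1)$ vanishing of $N$ at $0$ only yields $|N(x)|\le C x^{2m+1}$ for \emph{some} constant depending on $m$ and $\alpha$, whereas \eqref{prop23} claims the explicit constant $Q_m(-1)2^{s-2m}$. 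As written, the small-$x$ half of the argument is a gap, not a proof.

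The paper closes this gap by invoking Theorem 5 of \cite{Kenney1989}, an exact representation of the Pad\'e error,
\[
(1+x)^{-\alpha}-r_m(x)=\frac{Q_m(-1)}{Q_m(x)}\sum_{n=2m+1}^{\infty}\frac{(\alpha)_n\,(n-2m)_m}{n!\,(n+\alpha-m)_m}\,(-x)^n ,
\]
whose coefficients all lie in $(0,1)$; for $|x|\le 1/2$ the tail is dominated by the geometric sum $(1-1/2)^{-1}=2$, which is exactly where the factor $2$ in your target bound (equivalently the $2^{s-2m}=2\cdot 2^{s-2m-1}$ in $c_{m,s}$) originates. Your proposed alternative via the Taylor integral remainder would require $\sup_{t\in[0,1/2]}\bigl|\frac{d^{2m+1}}{dt^{2m+1}}[(1+t)^{-\alpha}Q_m(t)]\bigr|\lesssim (2m+1)!\,Q_m(-1)$, and the Leibniz expansion of that derivative is an alternating sum: a term-by-term absolute-value bound does not reassemble into $Q_m(-1)$, so the cancellation would have to be exhibited explicitly --- in effect rederiving Kenney's identity. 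Either cite that identity, as the paper does, or carry the combinatorics through; without one of the two the proof is incomplete.
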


\begin{proof}
	We use Theorem 5 of \cite{Kenney1989} which gives that 
	\[
	\bal (1+x)^{-\alpha}-r_m(x) &= \frac {Q_m(-1)}{Q_m(x)} 
	\sum_{n=2m+1}^\infty  \frac {(\alpha)_n(n-2m)_m}
	{n! (n+\alpha-m)_m} (-x)^n\\
	&:=\frac {Q_m(-1)}{Q_m(x)} (-x)^{2m+1} \sum_{i=0}^\infty d_{2m+1+i} (-x)^i.
	\eal
	\]
	A simple computation shows that $0<d_n<1$ for $n\ge 2m+1$ and so for $|x|<\tau<1$,
	\[
	 \bigg |\sum_{i=0}^\infty d_{2m+1+i} (-x)^i \bigg |\le (1-\tau)^{-1}.
	 \]
	Thus,  for $x\in [0,\tau]$ and $s\in [0,2m+1]$,
	\[
	\begin{aligned}
	\big |r_m(x)-(1+x)^{-\alpha} \big | & \le |Q_m(-1)| (1-\tau)^{-1} x^{2m+1} \\
	& \le |Q_m(-1)| \tau^{2m+1-s} (1-\tau)^{-1} x^{s}.
	\end{aligned}
	\]
	Finally, for the case $ x \in[\tau, \infty) $  by  Proposition \ref{p1},
	\[
	 |r_m(x)-(1+x)^{-\alpha}|\le 2\le 2x^{s}/\tau^{s}, \quad \hbox{
		for } x\in [\tau,\infty)
	\]
	and \eqref{prop23} follows by  taking $\tau=1/2$.
\end{proof}

\section{The time stepping schemes and their analysis}

In this section, we define and analyze both equally spaced time stepping 
schemes as well as schemes employing refinement near the origin.    
We shall restrict ourselves to the approximating the solution to finite dimensional 
problem \eqref{algebraic} described in the introduction even though generalizations to hermitian
and non-hermitian bounded operators on infinite dimensional spaces are possible.  
Recall that $\calB=\calA_h-\delta \calI$ with $\delta\in
(0,\lambda_1)$ and that $\| \cdot\|$ and $(\cdot,\cdot)$ denote, respectively, 
the norm and inner product in $L^2(\Omega)$.

\subsection{Time-stepping method on geometrically refined meshes (GRM)}
We  first consider the geometrically refined mesh that is constructed in two steps.

First, we take 
\[
\Lnt=\lceil \log(\lambda_M)/\log(2) \rceil
\]
and set $t_i=2^{i-1-L}$ for $i=1,\ldots,\Lnt+1$ and $t_0=0$. Note that $t_{\Lnt+1}=1$ and 
$2^{-\Lnt}\le (\lambda_M)^{-1}$.
Next, we define 
\begin{equation}\label{mesh-t}
k_n= \left \{ \bal t_{n}/N&:\qquad \hbox{for }n=1,\ldots,\Lnt,\\
t_1/N&:\qquad \hbox{when }n=0.\eal
\right . 
\end{equation}
Note that $k_0=k_1$ and 
\beq  {k_0 \lambda_M} = \frac {2^{-\Lnt}
	\lambda_M}{N}\le N^{-1}.
\label{Mb}
\eeq
The grid that we use in our computations is obtained by partitioning 
each subinterval 
$I_n=[t_n,t_{n+1}]$,    $ n=0,1,\ldots, \Lnt$,
into $N$ subintervals with end points
\[
t_{n,j} := t_{n} + jk_n  \quad \hbox{ for } j=0,\ldots,N.
\]
It can be seen that $t_{n,0}=t_{n}$ and $t_{n,N}=t_{n+1}$, $ n=0,1,\ldots, \Lnt$,
so that the mesh has totally $(\Lnt+1)N$ intervals.

Based on this partitioning, the refined time stepping method for
approximating $A_h^{-\alpha}v$ for $v\in V_h$ is given by:

\begin{algorithm}
	\label{a-basic}
	\begin{algorithmic}
		\STATE (a) Set $U_{0}=\delta^{-\alpha} \ff$. 
		 \STATE(b) {For $n=0,1,\ldots,L:$}
			\INDSTATE[1.5] (i) Set $U_{n,0}=U_{n}$.
			\INDSTATE[1.5] (ii) For {$j=1,2,\ldots,N$}, set 
				   \INDSTATE[3]  $$U_{n,j}=r_m(k_n \calB(\delta \calI+t_{n,j-1} \calB)^{-1})U_{n,j-1}.$$
		   \INDSTATE[1.5] (iii) Set $U_{n+1}=U_{n,N}$.
	\end{algorithmic}
\end{algorithm}
After executing the above algorithm, $U_{\Lnt+1}$ is the approximation to
$\calA_h^{-\alpha} \ff$.  Note that the notation differs slightly from
that used in the introduction.  The computation of $U_{\Lnt+1}$ requires
$\Kind=(\Lnt +1)N$
time steps.

The discrete eigenvalues and eigenvectors will play a major role in our
analysis so, for notational simplicity, we denote them by
$\{\psi_j,\lambda_j\}$  (instead of $\{\psi_{j,h},\lambda_{j,h}\}$ as in
the introduction).
We then have 
\[
\ff=\sum_{j=1}^\cM (\ff,\psi_j) \psi_j.
\]
Moreover,
\[
\calA_h^{-\alpha} v =\sum_{j=1}^\cM \lambda_j^{-\alpha} (\ff,\psi_j) \psi_j.
\]
The expansion for $U_{L+1}$ is given by
\[
U_{L+1}=\sum_{j=1}^\cM \mu(\lambda_j)  (\ff,\psi_j) \psi_j
\]
where the coefficient $\mu(\lambda_j)$ is given by the following algorithm:

\begin{algorithm}
	\label{a1}
	\begin{algorithmic}
			\STATE (a) Set $\mu_{0}=\delta^{-\alpha}$. 
			\STATE (b) For $n=0,1,\ldots,L:$
					\INDSTATE[1.5] (i) Set $\mu_{n,0}=\mu_{n}$.
					\INDSTATE[1.5] (ii) For $j=1,2,\ldots,N$, set 
					$$\mu_{n,j}=r_m(k_n (\lambda-\delta)/(\delta+t_{n,j-1} (\lambda-\delta)))\mu_{n,j-1}.$$
					\INDSTATE[1.5] (iii) Set $\mu_{n+1}=\mu_{n,N}$.
			\STATE (c) Set $\mu(\lambda)=\mu_{\Lnt+1}$.
	\end{algorithmic}
\end{algorithm}

\begin{theorem}\label{rate4nonsmooth}
	Let $N$ be a positive integer.   Then
	\begin{equation}\label{t1}
	|\lambda^{-\alpha} - \mu(\lambda)| \le \widetilde c N^{-2m}, \Forall
	\lambda\in [\lambda_1,\lambda_M]
	\end{equation}
	and 
	\begin{equation}\label{t2}
	\|\calA_h^{-\alpha} \ff -U_{\Lnt +1}\| \le \widetilde c N^{-2m} \| \ff \|, \Forall  \ff \in V_h.
	\end{equation}
	Here $\widetilde c$ is a
	constant depending only on $\delta$, $m$ and $\alpha$.
\end{theorem}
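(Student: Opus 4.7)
The plan is to reduce the vector estimate \eqref{t2} to the scalar estimate \eqref{t1} via the $L^2$-orthonormal eigenbasis of $\calA_h$, and then to bound $|\mu(\lambda)-\lambda^{-\alpha}|$ by comparing the Pad\'e recursion in Algorithm \ref{a1} with the exact scalar recursion. Define the exact trajectory $\nu_{n,j}:=(\delta+t_{n,j}(\lambda-\delta))^{-\alpha}$, which satisfies $\nu_{n,j}=(1+x_{n,j-1})^{-\alpha}\nu_{n,j-1}$ with $x_{n,j-1}:=k_n(\lambda-\delta)/(\delta+t_{n,j-1}(\lambda-\delta))\ge 0$ and $\nu_0=\delta^{-\alpha}=\mu_0$. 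Subtracting the two scalar recursions gives $e_{n,j}=r_m(x_{n,j-1})e_{n,j-1}+\epsilon_{n,j}\nu_{n,j-1}$, where $e_{n,j}:=\mu_{n,j}-\nu_{n,j}$ and $\epsilon_{n,j}:=r_m(x_{n,j-1})-(1+x_{n,j-1})^{-\alpha}$. Proposition \ref{p1} yields $0\le r_m(x)\le 1$ on $[0,\infty)$, so telescoping produces
\[
|\mu(\lambda)-\lambda^{-\alpha}|=|e_{L+1}|\le\sum_{n=0}^{L}\sum_{j=1}^{N}|\epsilon_{n,j}|\,\nu_{n,j-1}.
\]
Once \eqref{t1} is in hand, \eqref{t2} follows from Parseval applied to the diagonal action of $\calA_h^{-\alpha}-(\text{the operator producing }U_{L+1})$ in the eigenbasis.

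For the scalar bound, I would apply Proposition \ref{p2} with $s=2m+1$ to get $|\epsilon_{n,j}|\le c_{m,2m+1}\,x_{n,j-1}^{2m+1}$. The geometrically refined mesh supplies two elementary control estimates on the argument: for $n\ge 1$, the inequality $t_{n,j-1}\ge t_n$ together with $k_n=t_n/N$ forces $x_{n,j-1}\le k_n/t_{n,j-1}\le 1/N$; for $n=0$, estimate \eqref{Mb} forces $x_{0,j-1}\le k_0(\lambda-\delta)/\delta\le k_0\lambda_M/\delta\le 1/(N\delta)$. In either case one factors out $N^{-2m}$ (at the price of a $\delta$-dependent constant on the coarsest interval $n=0$) and rewrites
\[
x_{n,j-1}^{2m+1}\,\nu_{n,j-1}\le \frac{C(\delta)}{N^{2m}}\,x_{n,j-1}\nu_{n,j-1}=\frac{C(\delta)}{N^{2m}}\,k_n\,h(t_{n,j-1}),
\]
where $h(t):=(\lambda-\delta)(\delta+t(\lambda-\delta))^{-\alpha-1}$ is positive and strictly decreasing on $[0,1]$.

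The final move is to recognize $\sum_{n,j}k_n h(t_{n,j-1})$ as a left-endpoint Riemann sum for $\int_0^1 h$. Since consecutive values satisfy $h(t_{n,j-1})/h(t_{n,j})=(1+x_{n,j-1})^{\alpha+1}$, and the $x$-bounds above keep this ratio uniformly bounded by a constant depending only on $\alpha$ and $\delta$, the Riemann sum is comparable to
\[
\int_0^{1}h(t)\,dt=\frac{1}{\alpha}\bigl(\delta^{-\alpha}-\lambda^{-\alpha}\bigr)\le\frac{\delta^{-\alpha}}{\alpha}.
\]
Assembling gives $|\mu(\lambda)-\lambda^{-\alpha}|\le\widetilde c(\delta,m,\alpha)N^{-2m}$ uniformly in $\lambda\in[\lambda_1,\lambda_M]$, which closes both \eqref{t1} and \eqref{t2}. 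The main obstacle is precisely the uniformity in $\lambda$: a crude bound using only $x_{n,j-1}\le 1/N$ and $\nu_{n,j-1}\le\delta^{-\alpha}$ on each of the $(L+1)N$ substeps gives an extraneous factor $L+1\sim\log_2\lambda_M$, which in the finite-element context translates into an unacceptable logarithmic dependence of $\widetilde c$ on $h$. Weighting the per-step error by $\nu_{n,j-1}$ itself, so that the post-transition growth of $\nu$ is counterbalanced by the integrable tail of $h$, is what converts the naive $L$-fold accumulation into a single bounded integral and yields the genuinely mesh-independent constant asserted in Theorem \ref{rate4nonsmooth}.
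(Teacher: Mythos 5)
Your proposal is correct and follows essentially the same route as the paper: the same error recursion and telescoping via $0\le r_m\le 1$ (Proposition \ref{p1}), the same application of Proposition \ref{p2} with $s=2m+1$, the same separate treatment of the coarsest interval via \eqref{Mb}, and the same extraction of the factor $N^{-2m}$ from the geometric mesh followed by a Riemann-sum-versus-integral comparison using the bounded ratio of consecutive integrand values. The only (harmless) difference is that you first peel off $x_{n,j-1}^{2m}\le (C(\delta)/N)^{2m}$ and compare the remaining sum with $\int_0^1(\lambda-\delta)(\delta+t(\lambda-\delta))^{-\alpha-1}\,dt=\alpha^{-1}(\delta^{-\alpha}-\lambda^{-\alpha})$, which has an explicit antiderivative, whereas the paper keeps the full power $2m+1$ and bounds the resulting sum by the convergent integral $\int_0^\infty z^{2m}(\delta+z)^{-2m-\alpha-1}\,dz$ via the mean value theorem; both yield the same $\lambda$-uniform constant.
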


\begin{proof}  Fix $\lambda$ in  $[\lambda_1,\lambda_M]$ and let
	$\{\mu_n,\mu_{n,j}\}$ be as in Algorithm \ref{a1}. Further,
	for $n=0,\ldots,\Lnt$ and $j=1,\ldots,N$ let 
	$$v_{n,j}=(\delta +t_{n,j}(\lambda-\delta))^{-\alpha}, \ \ 
	e_{n,j}=v_{n,j}-\mu_{n,j}, \ \mbox{ and } \ \ \theta_{n,j}= k_n (\lambda-\delta)
	/(\delta+t_{n,j-1}(\lambda-\delta)).
	$$ 
	We note that as in \eqref{evo}, 
	\begin{equation*}
	v_{n,j}=\left(1+\theta_{n,j}\right)^{-\alpha}v_{n,j-1}.
	\end{equation*}
	Thus,
	\[
	e_{n,j} = r_m(\theta_{n,j}) 
	e_{n,j-1} + [(1+\theta_{n,j})^{-\alpha}-r_m(\theta_{n,j})] v_{n,j-1}
	\]
	and  by Proposition \ref{p1} and \ref{p2},
	\begin{equation}\label{start}
		|e_{n,j}| \le 
		|e_{n,j-1}| +
		|(1+\theta_{n,j})^{-\alpha}-r_m(\theta_{n,j})|\,|v_{n,j-1}|,
	\end{equation}
	for $n=0,1,\ldots,L$ and $j=1,\ldots,N$.
	
	Using  $e_{0}=0$, \eqref{start},  Proposition \ref{p2} and the
	definition of $v_{n,j-1}$,
	we have 
	\begin{equation}\label{err}
	\begin{aligned}
	|\lambda^{-\alpha}-\mu(\lambda)| &= |e_{\Lnt+1}|=\sum_{n=0}^{\Lnt}\sum_{j=1}^{N} \bigg(
	|e_{n,j}|-|e_{n,j-1}|\bigg)\\
	&\le c_{m,2m+1}\sum_{n=0}^{\Lnt}\sum_{j=1}^{N}  \theta_{n,j}^{2m+1}
	|v_{n,j-1}|\\ 
	&= c_{m,2m+1}\sum_{n=0}^{\Lnt}\sum_{j=1}^{N} \frac{(k_n
		(\lambda-\delta))^{2m+1} }{(\delta+t_{n,j-1}(\lambda-\delta))^{2m+\alpha+1}}.
	\end{aligned}
	\end{equation}
	
	We note that by \eqref{Mb},
	\beq\bal 
	\sum_{j=1}^N \frac{(k_0
		(\lambda-\delta))^{2m+1}}{(\delta
		+t_{0,j-1}(\lambda-\delta))^{2m+\alpha+1}}
	& \le \delta ^{-2m-\alpha-1}\sum_{j=1}^N N^{-2m-1} =\delta^{-2m-\alpha-1} N^{-2m}.
	\eal
	\label{Mterm}
	\eeq
	The remaining terms in \eqref{err} will be
	bounded by integration. Applying mean value theorem for integration it
	follows that for some $t_\theta\in [t_{n,j-1},t_{n,j}]$,
	\begin{equation*}
	\begin{aligned}
	&\int_{t_{n,j-1}}^{t_{n,j}}  \frac
	{t^{2m} (\lambda-\delta)^{2m}}
	{(\delta+t (\lambda-\delta))^{2m+\alpha+1}}\, dt= \frac
	{t_\theta^{2m} (\lambda-\delta)^{2m}}
	{(\delta+t_\theta (\lambda-\delta))^{2m+\alpha+1}}k_n\\
	&=\left(\frac{t_\theta}{t_{n,j-1}}\right)^{2m}\left(\frac{\delta+t_{n,j-1} (\lambda-\delta)}{\delta+t_{\theta} (\lambda-\delta)}\right)^{2m+\alpha+1} \frac
	{t_{n,j-1}^{2m} (\lambda-\delta)^{2m}}
	{(\delta+t_{n,j-1} (\lambda-\delta))^{2m+\alpha+1}}k_n\\
	&\ge \left(\frac{t_{n,j-1}}{t_\theta} \right)^{\alpha+1}\frac
	{t_{n,j-1}^{2m} (\lambda-\delta)^{2m}}
	{(\delta+t_{n,j-1} (\lambda-\delta))^{2m+\alpha+1}}k_n\\
	&\ge \left(\frac N {N+1} \right)^{\alpha+1}\frac
	{t_{n,j-1}^{2m} (\lambda-\delta)^{2m}}
	{(\delta+t_{n,j-1} (\lambda-\delta))^{2m+\alpha+1}}k_n.
	\end{aligned}
	\end{equation*}
	Now  $(N+1)/N \le 2$ and $t_{n}\le t_{n,j-1}$ for $j=1,2,\ldots,N$ so
	that
	\beq
	\bal
	\frac{(k_n
		(\lambda-\delta))^{2m+1} }{(\delta+t_{n,j-1}(\lambda-\delta))^{2m+\alpha+1}}
	&=N^{-2m}\frac{ t_{n}^{2m} k_n  (\lambda-\delta)^{2m+1}}
	{(\delta+t_{n,j-1} (\lambda-\delta))^{2m+\alpha+1}}\\
	&\hskip -80pt\relax \le  2^{\alpha+1} N^{-2m}   (\lambda-\delta)\int_{t_{n,j-1}}^{t_{n,j}}  \frac
	{t^{2m} (\lambda-\delta)^{2m}}
	{(\delta+t (\lambda-\delta))^{2m+\alpha+1}}\, dt.
	\eal
	\label{onet}
	\eeq
	
	Applying \eqref{onet} to the remaining terms in \eqref{err} we have
	\begin{equation}\label{nterm}
	\begin{aligned}
	&\sum_{n=0}^{\Lnt}\sum_{j=1}^{N}\bigg(
	|e_{n,j}|-|e_{n,j-1}|\bigg) \\
	&\le \delta^{-2m-\alpha-1} N^{-2m}+ 2^{\alpha+1}N^{-2m} 
	(\lambda-\delta)\int_0^1  \frac
	{t^{2m} (\lambda-\delta)^{2m}}
	{(\delta+t (\lambda-\delta))^{2m+\alpha+1}}\, dt\\
	&\le  \delta^{-2m-\alpha-1} N^{-2m}+2^{\alpha+1} N^{-2m} \int_0^\infty  \frac {z^{2m}} {(\delta+z)^{2m+\alpha+1}}\, dz.
	\end{aligned}
	\end{equation}
	As the last integral on the right converges,  \eqref{t1} follows
	combining  \eqref{err}, \eqref{Mterm} and \eqref{nterm}.
	
	The inequality \eqref{t2} follows immediately from \eqref{t1} and  the Parseval's identities:
	\begin{displaymath}
	\begin{aligned}
	\| \calA_h^{-\alpha} \ff -U_{\Lnt+1}\|^2&=\sum_{j=1}^\cM
	|\lambda_j^{-\alpha}-\mu(\lambda_j)|^2 |( \ff,\psi_j)|^2 \\
	&\le \widetilde c^2 N^{-4m}
	\sum_{j=1}^\cM  |(v,\psi_j)|^2=\widetilde c^2 N^{-4m}  \| \ff \|^2.
	\end{aligned}
	\end{displaymath}
\end{proof}

\begin{remark}  \label{s-h-norm} For any $s\in \RR$ and $v\in V_h$, let
	\begin{equation}\label{hs-norm}
		\| \ff \|_{{s,h}}:=\bigg(\sum_{j=1}^\cM \lambda_j^{s}
		|( \ff,\psi_j)|^2\bigg)^{1/2}=\|\calA_h^{s/2} \ff\|.
	\end{equation}
	Then,  it follows from the proof of the above theorem that for any
	$s\in \RR$, 
	\[
	\|\calA_h^{-\alpha} \ff   -  U_{\Lnt+1}  \|_{s,h} \le \widetilde c N^{-2m} \| \ff \|_{s,h}.
	\]
\end{remark}


\subsection{Time-stepping method on uniform meshes (UM)}
We next consider uniform time stepping.  In this case, given a positive
integer $N$, we set $k_N=1/N$ and $t_n=nk_N$. The approximation is
obtained from the recurrence
\beq
\bal U_0&=\delta^{-\alpha} \ff \in V_h,\\
U_n&= r_m(k_N \calB (\delta \calI +t_{n-1} \calB)^{-1}) U_{n-1},\quad \hbox{ for }
n=1,2,\ldots,N.
\eal
\label{unif}
\eeq
In this case, $U_N$ is our approximation to $\calA_h^{-\alpha} \ff $.  The
analysis of the error requires the following proposition.

\begin{proposition}\label{p-est-rm}  For $\lambda\ge \delta$,
	set
	\beq
	\theta_n= \frac {k_N (\lambda-\delta)}
	{\delta+t_{n-1}(\lambda-\delta)}.
	\label{thnunif}
	\eeq
	Then for $q\ge p>1$,  
	\begin{equation}\label{est-rm}
	\prod_{n=p}^{q} r_m(\theta_n)\le c \prod_{n=p}^{q} (1+\theta_n)^{-\alpha},
	\end{equation} 
	with $c$ depending only on $m$ and $\alpha$.
\end{proposition}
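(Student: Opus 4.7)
The plan is to reformulate the inequality in terms of the single ratio
$$\rho(x) := r_m(x)(1+x)^\alpha,$$
so that the claim \eqref{est-rm} becomes $\prod_{n=p}^q \rho(\theta_n) \leq c$. Taking logarithms reduces everything to bounding the sum $\sum_{n=p}^q \log \rho(\theta_n)$ by a constant depending only on $m$ and $\alpha$.

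The first step is to cash in on the hypothesis $p > 1$. For any $n \geq p \geq 2$, the denominator in the definition of $\theta_n$ dominates the $\delta$ term:
$$\theta_n = \frac{k_N(\lambda-\delta)}{\delta + (n-1)k_N(\lambda-\delta)} \leq \frac{1}{n-1} \leq 1.$$
This is where the assumption $p > 1$ earns its keep; if we allowed $n=1$, the term $\theta_1 = k_N(\lambda-\delta)/\delta$ could be arbitrarily large and the strategy below would collapse.

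Next I would apply Proposition \ref{p2} with the sharpest exponent $s = 2m+1$. Since $(1+\theta_n)^\alpha \leq 2^\alpha$ on $[0,1]$, this yields
$$|\rho(\theta_n) - 1| = (1+\theta_n)^\alpha \big|r_m(\theta_n) - (1+\theta_n)^{-\alpha}\big| \leq 2^\alpha c_{m,2m+1}\,\theta_n^{2m+1}.$$
Using the positivity of $\rho$ (a consequence of $r_m > 0$ from Proposition \ref{p1}) together with $\log y \leq y - 1$ for $y > 0$ gives the pointwise estimate $\log \rho(\theta_n) \leq C\,\theta_n^{2m+1}$, with $C$ depending only on $m$ and $\alpha$.

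The final step is to sum, using the bound $\theta_n \leq 1/(n-1)$:
$$\sum_{n=p}^{q} \log \rho(\theta_n) \leq C \sum_{n=p}^{q} \frac{1}{(n-1)^{2m+1}} \leq C\,\zeta(2m+1) < \infty,$$
where the $\zeta$-sum converges because $2m+1 \geq 3$ for $m \geq 1$. Exponentiating produces the desired constant. I do not foresee any real obstacle; the only two delicate choices are (i) exploiting $p > 1$ so that every $\theta_n$ lies in $[0,1]$, where Proposition \ref{p2} gives a genuine contraction $\rho(x) \to 1$, and (ii) selecting the maximal admissible exponent $s = 2m+1$, which is just large enough to render $\sum (n-1)^{-s}$ summable.
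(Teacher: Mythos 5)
Your proposal is correct and follows essentially the same route as the paper: both exploit $p>1$ to get $\theta_n\le (n-1)^{-1}\le 1$, apply Proposition \ref{p2} with $s=2m+1$ to bound $r_m(\theta_n)(1+\theta_n)^{\alpha}\le 1+2^{\alpha}c_{m,2m+1}(n-1)^{-2m-1}$, and conclude from the convergence of $\sum_j j^{-2m-1}$. Your passage through logarithms and $\log y\le y-1$ is just a repackaging of the paper's direct bound by the convergent infinite product $\prod_{j=1}^{\infty}\bigl(1+2^{\alpha}c_{m,2m+1}j^{-2m-1}\bigr)$.
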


\begin{proof}
	We note that for $n>1$, $\theta_n\le (n-1)^{-1}\in [0,1]$.
	By Proposition  \ref{p2},  
	\begin{displaymath}
		\begin{aligned}
		r_m(\theta_n)&\le
		(1+\theta_n)^{-\alpha}+c_{m,2m+1}|\theta_n|^{2m+1}\\
		&\le
		(1+\theta_n)^{-\alpha} (1+2^{\alpha}c_{m,2m+1} (n-1)^{-2m-1})
		\end{aligned}
        \end{displaymath}
	and hence
	\begin{displaymath}
		\frac { r_m(\theta_n)}{(1+\theta_n)^{-\alpha}}\le (1+2^{\alpha}c_{m,2m+1}
		(n-1)^{-2m-1}) .
	\end{displaymath}
	Thus,  for $q\ge p>1$, 
	\begin{displaymath}
	\prod_{n=p}^{q} \frac{r_m(\theta_n)}{(1+\theta_n)^{-\alpha}}\le
	\prod_{n=p}^{q}  (1+2^{\alpha}c_{m,2m+1}(n-1)^{-2m-1})  \le c
	\end{displaymath}
	with
	\begin{displaymath}
		c=  \prod_{j=1}^{\infty}  (1+2^{\alpha}c_{m,2m+1}
		j^{-2m-1}).
	\end{displaymath}
\end{proof}

Theorem~7.2 of \cite{thomeeBook} provides error estimates for single
step approximations for the standard parabolic problem \eqref{clas} with non-smooth
initial data.     
The next theorem
has the same flavor however differs
significantly as the solutions of our problem exhibit  less regularity.

\begin{theorem}\label{t:uniform1}
	Let $N>1$, $\ff  \in V_h$ and $U_N$ be defined by \eqref{unif}.  Then,
	for $\gamma\ge0$ and  $\alpha+\gamma\le 2m$,
	\beq \| \calA_h^{-\alpha} \ff -  U_N \| \le \widetilde c k_N^{\alpha +\gamma} \|\calA_h^\gamma \ff \|
	\label{unif-b}
	\eeq
	with $\widetilde c$ depending only on $\alpha$, $m$,  $\gamma$, and $\delta$.
	As in Remark \ref{s-h-norm}, the left hand norm above can be replaced by
	$\|\cdot\|_{h,r}$ provided that the right is replaced by
	$\|\calA_h^\gamma \ff\|_{h,r}$.
\end{theorem}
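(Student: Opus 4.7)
The plan is to mimic the eigenvalue-by-eigenvalue strategy used in the proof of Theorem~\ref{rate4nonsmooth}, but execute the scalar estimate more delicately to extract the factor $\lambda^\gamma$ that must appear on the right-hand side. First I reduce to a pointwise bound: expanding $f_h$ in the $L^2$-orthonormal eigenbasis $\{\psi_j\}$ of $\calA_h$ and noting that the operator defining $U_N$ acts diagonally, Parseval's identity together with the definition of $\|\cdot\|_{s,h}$ (Remark~\ref{s-h-norm}) reduces both parts of the theorem to the scalar estimate
\[
|\lambda^{-\alpha}-\mu(\lambda)| \le \tilde c\, k_N^{\alpha+\gamma}\lambda^\gamma, \qquad \lambda\in[\lambda_1,\lambda_M],
\]
where $\mu(\lambda)$ denotes the eigencomponent produced by the recurrence~\eqref{unif}.

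To derive the pointwise bound, I set $v_n=(\delta+t_n(\lambda-\delta))^{-\alpha}$, $e_n=v_n-\mu_n$ and $y=k_N(\lambda-\delta)$. From $v_n=(1+\theta_n)^{-\alpha}v_{n-1}$ and the recurrence I extract the single-step relation $e_n=r_m(\theta_n)e_{n-1}+[(1+\theta_n)^{-\alpha}-r_m(\theta_n)]v_{n-1}$; using $e_0=0$ this unrolls to
\[
e_N=\sum_{k=1}^N \bigg(\prod_{n=k+1}^N r_m(\theta_n)\bigg)\bigl[(1+\theta_k)^{-\alpha}-r_m(\theta_k)\bigr]v_{k-1}.
\]
Since every product that appears has lower index at least $2$, Proposition~\ref{p-est-rm} gives $\prod_{n=k+1}^N r_m(\theta_n)\le c\prod_{n=k+1}^N(1+\theta_n)^{-\alpha}=c\,v_N/v_k$, and $v_{k-1}/v_k=(1+\theta_k)^\alpha$ then collapses the bound to
\[
|e_N|\le c\lambda^{-\alpha}\sum_{k=1}^N(1+\theta_k)^\alpha\bigl|(1+\theta_k)^{-\alpha}-r_m(\theta_k)\bigr|.
\]

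I then split the analysis on the size of $\theta_1=y/\delta$. In the small-step regime $\theta_1\le 1$, the proof of Proposition~\ref{p-est-rm} shows $\theta_k\le 1/(k-1)\le 1$ for $k\ge 2$, so $(1+\theta_k)^\alpha\le 2^\alpha$ for every $k$. Applying Proposition~\ref{p2} with exponent $s=\alpha+\gamma+1$, which lies in $(1,2m+1]$ under the hypothesis $\alpha+\gamma\le 2m$ (and $\alpha>0$), and comparing $\sum\theta_k^s$ with the convergent integral $\int_0^\infty(\delta+ty)^{-s}\,dt$ bounds the sum by $C(y/\delta)^{\alpha+\gamma}$, whence $|e_N|\le C\lambda^{-\alpha}(k_N\lambda)^{\alpha+\gamma}=Ck_N^{\alpha+\gamma}\lambda^\gamma$. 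In the large-step regime $\theta_1>1$ one has $k_N\lambda\ge\delta$, so $\lambda^{-\alpha}\le\delta^{-\alpha}k_N^\alpha$; applying Proposition~\ref{p-est-rm} to $\prod_{n=2}^N r_m(\theta_n)$ together with $r_m(\theta_1)\le 1$ from Proposition~\ref{p1} also yields $\mu(\lambda)\le c\lambda^{-\alpha}(1+\theta_1)^\alpha\le Ck_N^\alpha$. The triangle inequality, combined with $(k_N\lambda)^\gamma\ge\delta^\gamma$, then produces the target rate directly, bypassing any per-step analysis.

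The main obstacle is that Proposition~\ref{p-est-rm} controls only products whose lower index is at least $2$, so $r_m(\theta_1)$ cannot be absorbed into a uniform comparison with $(1+\theta_1)^{-\alpha}$. When $\theta_1$ is large the factor $(1+\theta_1)^\alpha$ blows up in the collapsed sum and no single choice of exponent in Proposition~\ref{p2} produces the desired rate for the $k=1$ term; this is what forces the regime split. The choice $s=\alpha+\gamma+1$ (rather than the more obvious $s=\alpha+\gamma$) is equally delicate: the extra unit is exactly what makes the integral comparison converge and deliver a factor $y^{\alpha+\gamma}$ that, after division by $\lambda^\alpha$, matches the target rate $k_N^{\alpha+\gamma}\lambda^\gamma$.
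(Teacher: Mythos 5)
Your proposal is correct and follows essentially the same route as the paper: the same telescoping decomposition of $e_N$, stability of the Pad\'e products via Proposition~\ref{p-est-rm}, Proposition~\ref{p2} with $s=1+\alpha+\gamma$ for the generic terms, an integral comparison for the sum, and a separate $\theta_1\le 1$ versus $\theta_1>1$ treatment of the first step. The only (harmless) deviation is that you apply the regime split globally and, when $\theta_1>1$, bound $|e_N|\le \lambda^{-\alpha}+\mu(\lambda)$ directly instead of only modifying the first term of the telescoped sum, whereas the paper keeps the per-term analysis and uses $s=\alpha+\gamma$ for that term in the small-step case.
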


\begin{proof}   In this proof, $c$ denotes a generic positive constant
	only depending on $\alpha$,
	$\gamma$, $m$ and $\delta$.    We fix $\lambda\ge \delta$ and define, 
	for $j\ge l\ge 1$, 
	\[
	r_m^{j,l}(\lambda):=r_m^{j,l}=r_m(\theta_j)\,r_m(\theta_{j-1})\,\cdots\,r_m(\theta_l)
	\]
	and 
	\[w^{j,l}(\lambda):=w^{j,l}=(1+\theta_j)^{-\alpha}\,(1+\theta_{j-1})^{-\alpha}\,\cdots\,(1+\theta_l)^{-\alpha}.
	\]
	Finally, we set
	\[e_N(\lambda):=e_N=(w^{N,1}-r_m^{N,1}) \delta^{-\alpha}.
	\]
	
	We note that it is a consequence of \eqref{i.1} and  \eqref{i.2} that 
	\beq
	\delta^{-\alpha}  w^{N,1} =
	\lambda^{-\alpha}. 
	\label{prodid}
	\eeq
	For any $j>1$,
	\[
	w^{j,1}-r_m^{j,1}=  [(1+\theta_j)^{-\alpha}-r_m(\theta_j)] w^{j-1,1}
	+r_m(\theta_j) [w^{j-1,1}-r_m^{j-1,1}].
	\]
	Repeated application of this identity leads to 
	\beq
	\bal  e_N  & = \delta^{-\alpha} [w^{N,1}-r_m^{N,1}] \\
	& =   \delta ^{-\alpha}[(1+\theta_N)^{-\alpha} - r_m(\theta_N)] w^{N-1,1} \\
	&\qquad +
	\delta ^{-\alpha} r_m(\theta_N) [(1+\theta_{N-1})^{-\alpha} - r_m(\theta_{N-1})] w^{N-2,1}\\
	&\qquad  + \cdots  \\
	&\qquad + \delta ^{-\alpha}  r_m^{N,2} [(1+\theta_{1})^{-\alpha} -  r_m(\theta_{1})].  
	\eal
	\label{teles}
	\eeq
	
	Proposition \ref{p-est-rm}  implies that for $j\ge 2$,
	\beq
	|r_m^{N,j}|\le c w^{N,j}.\label{rb}
	\eeq
	We first bound the last term of  \eqref{teles} by applying this
	and \eqref{prodid} to obtain
	\beq
	\bal 
	T_1:=\delta^{-\alpha}|r_m^{N,2} [(1+\theta_{1})^{-\alpha} -
	r_m(\theta_{1})]| & \le c \delta^{-\alpha} w^{N,2}| (1+\theta_{1})^{-\alpha} -
	r_m(\theta_{1})|\\
	&= c\lambda^{-\alpha} (1+\theta_1)^\alpha |(1+\theta_{1})^{-\alpha} -
	r_m(\theta_{1})|.
	\eal
	\label{t1b0}
	\eeq
	When $\theta_1\le 1$,  Proposition  \ref{p2} with $s=\alpha+\gamma$ gives,
	\[
	T_1 \le c \theta_1^{\alpha+\gamma} \lambda^{-\alpha}.
	\]
	When $\theta_1>1$, since $\gamma\ge 0$,
	\[T_1 \le c (1+\theta_1)^\alpha   \lambda^{-\alpha}
	\le c \theta_1^{\alpha+\gamma} \lambda^{-\alpha}.
	\]
	Thus, in either case, since $\theta_1 = k_N (\lambda-\delta)/\delta \le
	k_N \lambda/\delta$,
	\beq
	T_1\le c k_N^{\alpha+\gamma} \lambda^{\gamma}.
	\label{t1b}
	\eeq
	
	The absolute value of the other terms in \eqref{teles} are given by
	\[
	T_j:=| r_m^{N,j+1}  [(1+\theta_{j})^{-\alpha} - r_m(\theta_{j})]
	w^{j-1,1}|,\qquad 
	\hbox{ for } j=2,3,\ldots N,
	\]
	where we have defined $r_m^{N,N+1}=1$ for convenience of notation.
	Similar to \eqref{t1b0}, we have 
	\[
	T_j \le c \lambda^{-\alpha} (1+\theta_j)^\alpha |(1+\theta_{j})^{-\alpha} -
	r_m(\theta_{j})|.
	\]
	In this case, $\theta_j$ is in $[0,1]$
	and we apply Proposition \ref{p2} with
	$s=1+\alpha+\gamma$ to obtain
	\[
	T_j\le c \lambda^{-\alpha} \theta_j^{1+\alpha+\gamma}\le
	c \frac {k_N (\lambda-\delta) k_N^{\alpha+\gamma} \lambda^{\gamma}}
	{(\delta+t_{j-1} (\lambda-\delta))^{1+\alpha+\gamma}}.
	\]
	Thus,  
	\beq
	\sum_{j=2}^N  T_j \le 
	c (\lambda-\delta) k_N^{\alpha+\gamma}
	\lambda^{\gamma}
	\int_0^1(\delta+t
	(\lambda-\delta))^{-1-\alpha-\gamma}\, dt
	\le c  k_N^{\alpha+\gamma}
	\lambda^{\gamma}.
	\label{t2b}
	\eeq
	Combining \eqref{t1b} and \eqref{t2b} gives
	\[
	|e_N(\lambda)| \le c  k_N^{\alpha+\gamma} \lambda^{\gamma}.
	\]
	
	We note that by \eqref{prodid},
	\[
	\calA_h^{-\alpha} \ff = \delta^{-\alpha} \sum_{j=1}^\cM w^{N,1}(\lambda_j)
	( \ff,\psi_j)\psi_j
	\quad\; \mbox{and} \quad\; 
	U_N= \delta^{-\alpha} \sum_{j=1}^\cM r_m^{N,1}(\lambda_j)
	( \ff,\psi_j)\psi_j .
	\]
	Thus,
	\[
	\bal
	\| \calA_h^{-\alpha}  \ff -U_N\|^2 &= \sum_{j=1}^\cM |e_N(\lambda_j)|^2
	|( \ff,\psi_j)|^2\\
	& \le  c k_N^{2\alpha+2\gamma} \sum_{j=1}^\cM
	\lambda_j^{2\gamma} |(v,\psi_j)|^2 = c k_N^{2\alpha+2\gamma}
	\| \calA_h^\gamma \ff \|^2
	\eal
	\]
	and this completes the proof.
\end{proof}

As seen in the above theorem, the discrete regularity of the 
solution determines the rate of convergence for the uniform step size  time stepping
method.  To some extent, the regularity of the discrete solution is related to the
regularity properties of the continuous problem which is being
approximated. This will be discussed in the next section.  

\section{Finite element approximation to fractional powers of second order elliptic operators}
\label{FEM}

We start with the second order elliptic problem  associated with the
bilinear form \eqref{bi-form} of the introduction, namely 
the  boundary value problem:  
\beq\label{strong}
\bal
-\nabla \cdot (a(x)\nabla w) + q(x) w &=f, \quad \hbox{ for }x\in \Omega,\\
w(x)&=0,\quad \hbox{ for }x\in \partial \Omega.
\eal
\eeq
Here $q(x),a(x)$ and $\Omega$ are as in the introduction.  The bilinear
form \eqref{bi-form} results from \eqref{strong} in the usual way, i.e.,
integration against a test function and integration by parts.

We
start by providing some results for the error between the
semi-discrete approximation $u_h$ given by \eqref{algebraic} and the
solution $u$ of \eqref{fracprob}.    These results depend on the following
regularity condition:

\begin{assumption}\label{assume1}  
	$\calT$ satisfies elliptic regularity pickup with index $s \in (0,1]$,
	that is 
	\begin{enumerate}[(a)]
		\item For $f\in H^{-1+s}(\Omega)$, $\calT f$ is in $H^{1+s}(\Omega)$ and
		there is a constant $c$ not depending on $f$ satisfying
		\[
		\|\calT f \|_{H^{1+s}(\Omega)} \le c \|f\|_{H^{-1+s}(\Omega)}.
		\]
		\item    $\calA=\calT^{-1}$ is a bounded map of $H^{1+s}(\Omega) $ into
		$H^{-1+s}(\Omega)$.
	\end{enumerate}
\end{assumption}

\begin{remark}  \label{norms1s}
	In \cite{BP-frac}, it has been shown that this implies
	\[
	D(\calA^{t/2}) =H^1_0(\Omega)\cap H^t(\Omega), \quad \hbox{ for  } t \in[1,1+s]
	\]
	with equivalent norms.
\end{remark}

The above remark shows that $D(\calA^{t/2})$ coincides with a Sobolev space
of index $t$.   Accordingly, we introduce the notation 
\[
\dH^t=D(\calA^{t/2}), \quad \hbox{ for } t\ge 0.
\]

A detailed estimation of the error $u - u_h= \calT^{\alpha}f- \calT_h^{\alpha} \pi_h f$
can be found  in \cite[Theorem 4.3]{BP-frac} and is  summarized below
(see also, \cite{fujitasuzuki} for the case when $s=1$).
\begin{theorem}\label{FEM-error} 
	(see, \cite[Theorem 4.3]{BP-frac}) Let Assumption \ref{assume1}
	hold and assume that the mesh is globally quasi-uniform so that the 
	inverse inequality holds, e.g. \cite{thomeeBook}
	or \cite[inequality (45)]{BP-frac}.
	Set $\beta =s -\alpha$ when $s > \alpha$ and $\beta=0$ when $s \le \alpha$.
	For $\gamma  \ge \beta$, there is a constant $C$ uniform in $h$ such that
	\beq\label{BP-error}
	\|\calT^{\alpha}f- \calT_h^{\alpha} \pi_h f \|   \le C_{h,\gamma}h^{2s}  \|f\|_{\dH^{2\gamma}}
	\Forall f \in \dH^{2 \gamma},
	\eeq
	where
	\[
	  C_{h,\gamma} = \left \{  \bal
	C \log(1/h),  & \quad \hbox{ when } \  \gamma = \beta \ \ \mbox{and}  \ s \ge \alpha,\\
	C,  & \quad \hbox{ when } \  \gamma > \beta \ \ \mbox{and}  \ s \ge \alpha ,\\
	C, & \quad \hbox{ when } \ \alpha > s. \eal
	\right.
	\]
\end{theorem}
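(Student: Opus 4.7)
The plan is to use a Balakrishnan-type integral representation
$$\calT^\alpha f = \frac{\sin(\pi\alpha)}{\pi}\int_0^\infty \mu^{-\alpha}(\mu\calI+\calA)^{-1}f \, d\mu,$$
together with the analogous identity on $V_h$ obtained by replacing $\calA$ by $\calA_h$ and $f$ by $\pi_h f$. Subtracting,
$$\calT^\alpha f - \calT_h^\alpha\pi_h f = \frac{\sin(\pi\alpha)}{\pi}\int_0^\infty \mu^{-\alpha}\,e_\mu\, d\mu,$$
where $e_\mu:=(\mu\calI+\calA)^{-1}f-(\mu\calI+\calA_h)^{-1}\pi_h f$ is the $L^2$-error of the Galerkin approximation to the shifted elliptic problem $(\mu\calI+\calA)w_\mu=f$, associated with the uniformly coercive (in $\mu\ge 0$) bilinear form $A(\cdot,\cdot)+\mu(\cdot,\cdot)$. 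The theorem then reduces to deriving $\mu$-explicit bounds on $\|e_\mu\|$ and integrating.

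The heart of the proof is an Aubin--Nitsche duality argument applied uniformly in $\mu$. Assumption \ref{assume1}, applied to both the primal and adjoint shifted problems, yields an estimate of the form $\|e_\mu\|\le C h^{2s}\|w_\mu\|_{\dH^{1+s}}$, which a spectral computation then converts into an inequality of the shape $\|e_\mu\|\le C h^{2s}\,\omega(\mu,s,\gamma)\,\|f\|_{\dH^{2\gamma}}$, with a weight $\omega$ that trades regularity of $f$ for decay in $\mu$. I would pair this with the crude stability bound $\|e_\mu\|\le C(1+\mu)^{-1}\|f\|$ coming from $L^2$-stability of each resolvent, which dominates in the far field. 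In the regime $\alpha>s$ there is insufficient regularity on the continuous side to carry out the duality argument directly, and I would compensate using the discrete inverse inequality on $V_h$; this structural difference is what produces the third case of the $C_{h,\gamma}$ table, where $\beta=0$ and no logarithm appears.

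Inserting these bounds into the $\mu$-integral and splitting at a threshold of order $h^{-2}$ produces a bound of the form $C h^{2s}\|f\|_{\dH^{2\gamma}}$ whenever the exponent of $\mu$ at infinity is strictly less than $-1$. The borderline regime is exactly $\gamma=\beta$, where that exponent equals $-1$ and the split integral accumulates an additional $\log(1/h)$; for $\gamma>\beta$ the integral converges and the logarithm disappears. The main obstacle is tracking the $\mu$-dependence through the duality argument and the spectral computation with enough sharpness that the three regimes of the theorem combine to give \emph{exactly} the optimal power $h^{2s}$: obtaining $\log(1/h)$ rather than a negative power of $h$ at the critical case $\gamma=\beta$ requires the threshold choice and the tail integrals to be exactly tight, and handling $\alpha>s$ requires the inverse inequality to be used in an $h$-uniform way. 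Once these sharp resolvent-error bounds are in hand, the final assembly is a routine calculation.
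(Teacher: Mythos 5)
The paper does not actually prove this theorem; it is quoted verbatim from \cite[Theorem 4.3]{BP-frac}, and your outline is precisely the strategy of that reference: the Balakrishnan integral representation of $\calT^{\alpha}$, a $\mu$-uniform Aubin--Nitsche estimate for the shifted Galerkin error $e_\mu$ combined with the crude resolvent stability bound, and a splitting of the $\mu$-integral at $\mu\sim h^{-2}$, with the logarithm arising exactly at the borderline exponent $\gamma=\beta$. Your proposal is correct in approach and essentially reproduces the proof of the cited source.
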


As we see from this theorem, the rate of convergence in the
$L^2(\Omega)$-norm is 
the result of 
an interplay between the fractional order $\alpha$, the regularity 
pick up $s$ of the solution of problem  \eqref{fracprob}, and the 
regularity of the right hand side $f$.  The bottom line is that one recovers optimal 
convergence rate $O(h^{2s})$ for $\alpha > s$ when $f \in L^2(\Omega)$.  
However, if $\alpha < s$, the solution is not in $H^{2s}(\Omega)$
without  extra regularity from $f$ so this additional smoothness is
needed  to get the same rate.

Now if we approximate the problem \eqref{algebraic} using the method of Vabishchevich 
on the geometrically refined mesh as described in  Algorithm \ref{a-basic}, we get the
following bound for the total error (approximation by finite elements and time-stepping):
\begin{corollary}  \label{c:geometric}
	Assume the conditions of Theorem \ref{FEM-error} hold.
	Then  $U_{L+1}$, obtained by  Algorithm \ref{a-basic} with $v=\pi_h f$,
	$\calB=\calA_h-\delta \calI$ and
	performing $K=(L+1)N$ steps, satisfies 
	\[
	\| \calT^\alpha f - U_{L+1} \| \le C (h^{2s} \|f \|_{\dH^{2\gamma}} + N^{-2m} \|f \|).
	\]
\end{corollary}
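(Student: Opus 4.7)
The plan is straightforward: this corollary follows from combining the two error estimates already established, namely the finite element error bound of Theorem~\ref{FEM-error} and the time-stepping error bound of Theorem~\ref{rate4nonsmooth}, via the triangle inequality. No new ideas are needed; the task is essentially bookkeeping.

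First I would write
\[
\|\calT^\alpha f - U_{L+1}\| \;\le\; \|\calT^\alpha f - \calT_h^\alpha \pi_h f\| \;+\; \|\calT_h^\alpha \pi_h f - U_{L+1}\|.
\]
The first term on the right is exactly the finite element discretization error analyzed in Theorem~\ref{FEM-error}, and under the stated hypotheses it is bounded by $C_{h,\gamma} h^{2s} \|f\|_{\dH^{2\gamma}}$; absorbing the possible $\log(1/h)$ factor (in the borderline case $\gamma=\beta$, $s\ge\alpha$) into the generic constant $C$ yields the desired $C h^{2s} \|f\|_{\dH^{2\gamma}}$ contribution.

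For the second term, I would recall that $\calT_h^\alpha \pi_h f = \calA_h^{-\alpha} \pi_h f$, so this is precisely the quantity estimated in Theorem~\ref{rate4nonsmooth} with $\ff = \pi_h f \in V_h$. That theorem gives
\[
\|\calA_h^{-\alpha} \pi_h f - U_{L+1}\| \;\le\; \widetilde c\, N^{-2m}\, \|\pi_h f\|,
\]
and since $\pi_h$ is the $L^2(\Omega)$-orthogonal projection onto $V_h$ we have $\|\pi_h f\|\le \|f\|$, producing the second contribution $C N^{-2m}\|f\|$. Adding the two bounds gives the claimed estimate.

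There is no real obstacle here. The only small points that require attention are (i) verifying that $\pi_h f$ lies in $V_h$ so that Theorem~\ref{rate4nonsmooth} applies (immediate), (ii) checking the spectral condition $\delta<\lambda_1 \le \lambda_{h,1}$ needed to define $\calB=\calA_h-\delta\calI$ (which follows because the discrete eigenvalues are bounded below by the continuous ones under the standard Rayleigh quotient argument, and $\delta$ can be chosen accordingly), and (iii) handling the logarithmic factor in the borderline case by either noting it is dominated by a larger power for small $h$ or by folding it into the constant with a mild restriction on $h$. None of these affects the structure of the estimate.
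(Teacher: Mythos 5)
Your proof is correct and is exactly the argument the paper intends: the corollary is an immediate consequence of combining Theorem \ref{FEM-error} and Theorem \ref{rate4nonsmooth} (applied with $\ff=\pi_h f$, using $\|\pi_h f\|\le\|f\|$) via the triangle inequality, which is why the paper states it without proof. Your side remarks on the spectral condition and the borderline logarithmic factor are appropriate and do not change the structure of the estimate.
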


\begin{corollary}\label{c:uniform}
	Assume the conditions of Theorem \ref{FEM-error} hold. Then
	$U_{N}$, obtained by applying \eqref{unif} with $v=\pi_h f $ and
	$\calB=\calA_h - \delta \calI$, 
	satisfies 
	\[
	\| \calT^\alpha f - U_{N} \| \le C (h^{2s}\|f \|_{\dH^{2\gamma}}
	+ N^{-\alpha-\beta}\|\calA_h^\beta \pi_h f\|), \quad \hbox{for
	}0\le\beta\le 2m-\alpha.
	\]
\end{corollary}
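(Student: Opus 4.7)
The plan is to obtain Corollary \ref{c:uniform} by a straightforward triangle inequality splitting of the total error into a spatial (finite element) contribution and a temporal (time-stepping) contribution, then invoking Theorem \ref{FEM-error} and Theorem \ref{t:uniform1} respectively on the two pieces.

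First I would write, using that $\calT_h^\alpha \pi_h f = \calA_h^{-\alpha} \pi_h f$ by the definition \eqref{algebraic},
\[
\| \calT^\alpha f - U_N \| \;\le\; \| \calT^\alpha f - \calT_h^\alpha \pi_h f \| \;+\; \| \calA_h^{-\alpha} \pi_h f - U_N \|.
\]
For the first summand, Theorem \ref{FEM-error} immediately gives
\[
\| \calT^\alpha f - \calT_h^\alpha \pi_h f \| \;\le\; C_{h,\gamma}\, h^{2s}\, \| f \|_{\dH^{2\gamma}},
\]
provided $\gamma$ is at least the threshold $\beta^\star = \max(s-\alpha,0)$ appearing in that theorem.

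For the second summand, I would observe that $U_N$ is exactly the output of the uniform time-stepping recurrence \eqref{unif} with initial data $U_0 = \delta^{-\alpha} \pi_h f$, so Theorem \ref{t:uniform1}, applied with $\ff = \pi_h f$ and with the regularity exponent there (called $\gamma$) renamed to $\beta$, yields
\[
\| \calA_h^{-\alpha} \pi_h f - U_N \| \;\le\; \widetilde c\, k_N^{\alpha+\beta}\, \| \calA_h^\beta \pi_h f \|
\;=\; \widetilde c\, N^{-\alpha-\beta}\, \| \calA_h^\beta \pi_h f \|,
\]
whenever $0 \le \beta \le 2m-\alpha$. Adding the two estimates and absorbing $C_{h,\gamma}$ and $\widetilde c$ into a single constant $C$ produces the stated bound.

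There is essentially no hard step: the corollary is a clean concatenation of two already-proved results. The only care required is notational bookkeeping. The symbol $\beta$ is used with three different meanings across the excerpt (the threshold $\max(s-\alpha,0)$ in Theorem \ref{FEM-error}, the regularity exponent $0 \le \beta \le 2m-\alpha$ in the corollary, and the exponent $\gamma$ in Theorem \ref{t:uniform1}), and one must verify that the admissibility condition $\gamma \ge \beta^\star$ for the spatial estimate is compatible with the range $0 \le \beta \le 2m-\alpha$ used for the temporal estimate. Finally, one should note that $\pi_h f \in D(\calA_h^\beta)$ is automatic since $V_h$ is finite dimensional and $\calA_h$ is SPD, so $\|\calA_h^\beta \pi_h f\|$ is always well defined as the right-hand data norm.
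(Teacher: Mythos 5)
Your proposal is correct and is exactly the argument the paper intends: the corollary is stated without proof precisely because it is the immediate triangle-inequality concatenation of Theorem \ref{FEM-error} (spatial error) and Theorem \ref{t:uniform1} with $\gamma$ there playing the role of $\beta$ (temporal error). Your remark about the clash of the symbol $\beta$ between the two results is a fair observation about the paper's notation, but it does not affect the validity of the argument.
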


We next consider the question of bounding the norm $ \|\calA_h^\beta\pi_h f\|$ in 
terms of the regularity of $f$.   For $\beta \in [0,1/2]$, this reduces to showing 
that the $L^2(\Omega)$-projector into $V_h$ is a bounded operator on $H^1_0(\Omega)$ 
with bound independent of $h$. For globally quasi-uniform meshes, this result is given in
\cite{bankdupont,BrambleXu} while the case of certain refined meshes is given in 
\cite{bankYserentant}. When the $H^1$ bound holds, by interpolation, there is a constant $c$
depending only on $\beta \in [0,1/2]$ satisfying 
\beq
\|\calA_h^\beta\pi_h v \| \le c \|\calA^\beta v\|, \Forall v\in D(\calA^\beta).
\label{discretepih}
\eeq

We extend the above inequality to $\beta\in [1/2,(1+s)/2]$ in the next lemma whose
proof is included for completeness as it was already observed in
\cite{wenyu-thesis}.

\begin{lemma} Assume that Assumption \ref{assume1} holds and that the
	mesh is globally quasi-uniform.   Then there is a constant $c$  depending on
	$\beta\in [1/2,(1+s)/2]$ such that \eqref{discretepih} holds.
\end{lemma}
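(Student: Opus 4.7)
The plan is to prove the estimate at the upper endpoint $\beta=(1+s)/2$ by a Ritz-projection argument and then interpolate with the given case $\beta=1/2$ to fill in the intermediate values. Let $R_h\colon H^1_0(\Omega)\to V_h$ denote the Ritz projection defined by $A(R_h v,\phi_h)=A(v,\phi_h)$ for all $\phi_h\in V_h$; the key algebraic identity is $\calA_h R_h v=\pi_h\calA v$ for $v\in D(\calA)$. I would split $\pi_h v=R_h v+(\pi_h v-R_h v)$ and bound the two pieces separately.

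For the Ritz part, the point is that $\calA_h R_h$ absorbs one power of $\calA_h$, so
\[
\calA_h^{(1+s)/2}R_h v=\calA_h^{(s-1)/2}\pi_h\calA v.
\]
Since $s-1\in[-1,0]$, the task reduces to bounding $\pi_h$ from $\dH^{s-1}$ into the $\|\cdot\|_{s-1,h}$ scale. I would obtain this by dualizing the hypothesized $H^1$-stability of $\pi_h$ in the $L^2$ pairing (using the equivalence of $\|\cdot\|_{1,h}$ and the $\dH^1$-norm on $V_h$) to get $\|\pi_h w\|_{-1,h}\le c\|w\|_{\dH^{-1}}$, and then interpolating with the trivial $L^2$ bound at $r=0$. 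Applied with $w=\calA v$ and $r=s-1$, this yields $\|\calA_h^{(1+s)/2}R_h v\|\le c\|\calA v\|_{\dH^{s-1}}=c\|\calA^{(1+s)/2}v\|$.

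For the difference $R_h v-\pi_h v$, which lies in $V_h$, I would combine the inverse inequality on the quasi-uniform mesh, $\|\calA_h^{(1+s)/2}w_h\|\le c\,h^{-s}\|\calA_h^{1/2}w_h\|$, with the standard $O(h^s)$ $A$-norm approximation estimates $\|v-R_h v\|_A,\,\|v-\pi_h v\|_A\le c\,h^s\|v\|_{\dH^{1+s}}$ (the latter bound itself using the hypothesized $H^1$-stability of $\pi_h$). The factors $h^{\pm s}$ cancel and a triangle inequality yields $\|\calA_h^{(1+s)/2}(R_h v-\pi_h v)\|\le c\|\calA^{(1+s)/2}v\|$, settling the endpoint case.

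The main technical obstacle is executing the final interpolation step cleanly, because the norm on the left and that on the right live in different spectral scales ($\calA_h$ on $V_h$ versus $\calA$ on $D(\calA^\beta)$). I would phrase the claim as uniform boundedness on $L^2(\Omega)$ of the analytic family $T_\beta:=\calA_h^\beta\pi_h\calA^{-\beta}$ (defined on the dense subspace $D(\calA^\beta)$), establish this at the two endpoints $\beta=1/2$ and $\beta=(1+s)/2$, and then invoke Stein's interpolation theorem for analytic operator families (or, equivalently, complex interpolation between the graph norms induced by powers of $\calA$) to conclude the estimate for every intermediate $\beta\in[1/2,(1+s)/2]$.
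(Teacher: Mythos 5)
Your proposal is correct, and it rests on the same two pillars as the paper's own proof: the splitting $\pi_h v = P_h v + (\pi_h v - P_h v)$ through the Ritz (elliptic) projection together with the identity $\calA_h P_h v = \pi_h \calA v$, and an inverse inequality on the quasi-uniform mesh to absorb the difference term against the $O(h^{t-1})$ approximation estimates in $H^1$. Where you genuinely diverge is in how the interpolation is organized. The paper interpolates only the Ritz projection, between the two clean endpoint bounds $\|\calA_h^{1/2}P_h w\| \le \|\calA^{1/2}w\|$ and $\|\calA_h P_h v\| \le \|\calA v\|$, to get $\|\calA_h^{r}P_h v\| \le \|\calA^{r}v\|$ for all $r\in[1/2,1]$; the perturbation $\pi_h v - P_h v$ is then added separately at each level $t=2\beta$, so no further interpolation is required. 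You instead prove the full statement only at the endpoint $\beta=(1+s)/2$ --- which routes you through a negative-norm bound $\|\pi_h w\|_{-1,h}\le \|w\|_{\dH^{-1}}$ (this is in fact automatic from the definition of the dual norm and does not need the $H^1$-stability of $\pi_h$ you invoke) plus an interpolation on $[-1,0]$ --- and then interpolate the whole family $\calA_h^{\beta}\pi_h\calA^{-\beta}$ between $\beta=1/2$ and $\beta=(1+s)/2$ via Stein's theorem. That last step is legitimate (the boundary factors $\calA_h^{i\tau}$ and $\calA^{-i\tau}$ are unitary, so the line bounds reduce to the real endpoint bounds), but it is heavier than necessary: since both scales are generated by positive self-adjoint operators, ordinary Hilbert-space interpolation of the single operator $\pi_h$ between the couples $\bigl(D(\calA^{1/2}),D(\calA^{(1+s)/2})\bigr)$ and $\bigl((V_h,\|\cdot\|_{1,h}),(V_h,\|\cdot\|_{1+s,h})\bigr)$ already yields the intermediate cases --- exactly the kind of interpolation the paper applies to $P_h$. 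In short: same decomposition and same key identity, but the paper's arrangement confines the interpolation to the well-behaved operator $P_h$ and thereby avoids both the duality step and the analytic-family machinery, while your version has the mild virtue of isolating the hardest case $\beta=(1+s)/2$ explicitly.
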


\begin{proof} 
	Let $P_h:H^1_0(\Omega)\rightarrow V_h$ denote the elliptic
	projector, i.e., $P_hw=w_h\in V_h$ is the unique solution of 
	\[
	A(w_h,\theta_h)=A(w,\theta_h),\Forall \theta_h\in V_h.
	\]
	Without loss of generality, we can take the norm on $H^1_0(\Omega)$ to
	be 
	\[\|v\|_{H^1(\Omega)} = A(v,v)^{1/2}=\|\calA^{1/2} v\|.\]
	We then have  
	\[
	\| \calA_h^{1/2}P_h w\|=  \|P_h w\|_{H^1(\Omega)} \le
	\|w\|_{H^1(\Omega)}=\|\calA^{1/2}w\|,
	\]
	for all $ w\in H^1_0(\Omega)=D(\calA^{1/2})$, while the identity 
	$\calA_h P_h v=\pi_h \calA v$ for $v\in D(\calA)$ implies that
	\[\|\calA_h P_h v \|= \|\pi_h \calA v \| \le \|\calA v\|,\Forall v\in D(\calA).\]
	It follows by interpolation that for $r\in [1/2,1]$,
	\[\|\calA^r_hP_h v \|\le   \|\calA^rv\|, \Forall v\in D(\calA^r).\]
	Now for $t\in [1,1+s]$, Remark \ref{norms1s} implies that for $
	\dH^{t}=D(\calA^{t/2})=H^t(\Omega)\cap H^1_0(\Omega)$.  
	Thus, for $v\in \dH^{t}$ ,  
	\[
	\|\pi_h v \|_{h,t} \le 
	\|(\pi_h-P_h)v\|_{h,t} +\|P_h v\|_{h,t}\le 
	C (h^{1-t} \|(\pi_h-P_h)v\|_{H^1(\Omega)} +\|v\|_{\dH^{t}(\Omega)}) 
	\]
	where we also used the inverse inequality for the last inequality
	above.   The inequality \eqref{discretepih} for $\beta = t/2\in
	[1/2,(1+s)/2]$  follows from  the above inequality,
	the triangle inequality and the well know error estimates
	\begin{displaymath}
		\|(I-\pi_h)v\|_{H^1(\Omega)}+\|(I-P_h)v\|_{H^1(\Omega)} \le ch^{t-1}
		\|v\|_{H^{t}(\Omega)}.
	\end{displaymath}
\end{proof}

\section{Numerical examples} \label{numer}

In this section, we present numerical examples for the problem 
coming from \eqref{bi-form} with $a(x)=1$, $q(x)=0$ and
$\Omega\subset\RR^d$, for $d=1,2$.

\subsection{One dimensional examples:  $\Omega=(0,1)$}
We will consider approximating the solution of  \eqref{fracprob}  with the following
choices of $f$:
\begin{enumerate}[(a)]
	\item $f= \hbox{exp}(-1/x-1/(1-x)+4) $  so that $f\in \dH^{s}$
	for any $s$.
	\item $f= x(1-x)  $  so that $f\in \dH^{s}$ for $s<\frac52$.
	\item $f=\min(x,1-x)  $  so that $f\in \dH^{s}$ for $s<\frac32$.
	\item $f=1$, so that $f\in \dH^{s}$ for $s<\frac12$.
\end{enumerate}
We note that $f=1$ fails to be in $\dH^{1/2}$ since functions in
$\dH^{1/2}$ vanish at $x=0$ and $x=1$.

The first set of runs demonstrate the time stepping error behavior using the
geometric refined time stepping algorithm (GRM) and the uniform time
stepping scheme (UM) for various $\alpha$ and $N$.  In this case, we use
a fixed equally spaced mesh with $h=1/1000$ and 
$L=\lceil 2  {|\log h |}/{\log 2}\rceil$.    
The total number of time
steps for the GRM scheme is thus $(L+1)N$, with $N=1,2,4,8,\ldots$.  To
make the comparison more meaningful, we report the errors obtained using
the UM and GRM algorithms  as a function of the number of solves.

For the first plot, we use $f$ given by (c) above and $\alpha=0.1,0.5,0.9$.  
Figure \ref{figure1d} gives  $\log-\log$ plots of
the relative $L^2$ error between $u_h=\mathcal{A}_h^{-\alpha} \pi_h f$ and the
result obtained using the refined and uniform time stepping schemes
with $m=1$ (left plot) and $m=2$ (right plot) as a function of the
number of solves.  Note that the GRM method leads to smaller error
using the same number of solves.   
Plots for $f$ given by (b) and (d) are similar and
are omitted.  

\begin{figure}[!htb]
	\centering
		\subfloat[ Case (c): $m=1$]{\includegraphics[width=0.50\textwidth,trim=500 40 500 40, clip]{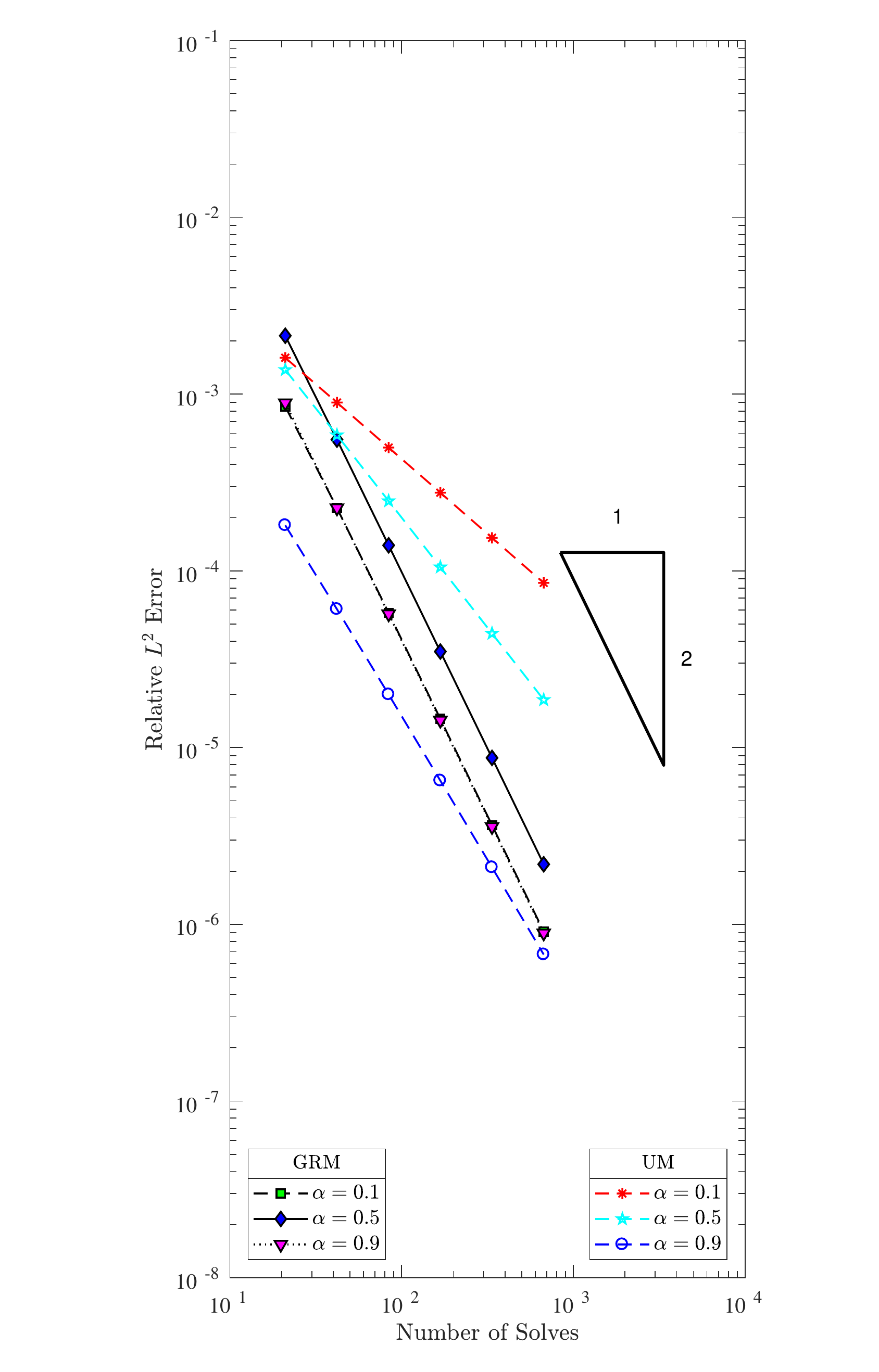}}
		\subfloat[ Case (c): $m=2$]{\includegraphics[width=0.50\textwidth,trim=500 40 500 40, clip]{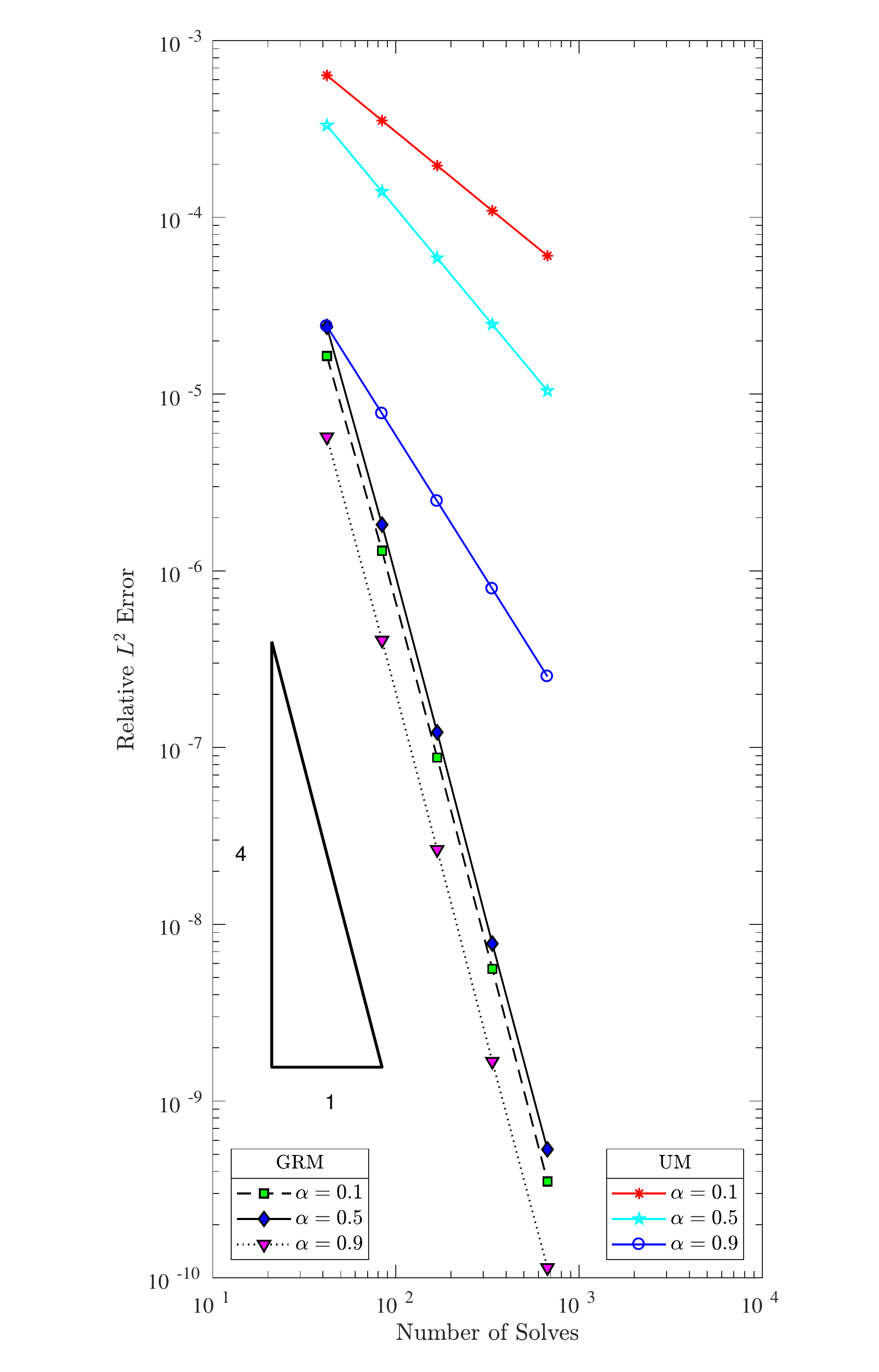}}
	\caption{Case (c):  relative $L^2$ error on geometrically
		refined mesh(GRM) and uniform mesh(UM) for $m=1$ (left) and
		$m=2$ (right).}\label{figure1d}
\end{figure}

To further demonstrate that the numerical results reflect the
theoretical results proved earlier, we report the approximate order of
convergence going from $N=n$ to $N=2n$, 
\beq 
\hbox{Approximate order}:= \log(E_{n}/E_{2n})/\log(2).\label{order}
\eeq
We used $n=8$ for both the UM runs and the GRM runs. The reason that we chose this $n$  for this computation is that the errors were getting so small in the GRM algorithm for $N\ge 32$ that, we suspect, computer round off  was effecting their significance.

\begin{table}[h!]
	\centering
	\caption{Approximate order vs. theoretical convergence rates for  $m=1$.}
	\label{table1}
	\begin{tabular}{|c|lll| lll |}
		\hline
		&\multicolumn{3}{c|}{GRM scheme.}&
		\multicolumn{3}{c|}{UM scheme.}\\
		\hline
		$\alpha$&0.1&0.5&0.9&0.1&0.5&0.9\\
		\hline
		$f$ by (a)&2.00(2)&2.00(2)&2.00(2)&1.87(2)&1.95(2)&1.99(2)\\
		$f$ by (b)&2.00(2)&2.00(2)&2.00(2)&1.34(1.35)&1.71(1.75)&1.94(2)\\
		$f$ by (c)&2.00(2)&2.00(2)&2.00(2)&0.85(0.85)&1.25(1.25)&1.63(1.65)\\
		$f$ by (d)&2.00(2)&2.00(2)&2.00(2)&0.40(0.35)&0.77(0.75)&1.16(1.15)\\
		\hline
	\end{tabular}
\end{table}

We report the approximate order of convergence computed using \eqref{order} and  compare it with the theoretical rate (in parenthesis) in 
Table \ref{table1} and Table \ref{table2}. Note that the approximate order of convergence was under the assumption that  inequality \eqref{discretepih} holds.  Table \ref{table1} and  Table \ref{table2} give the rates when $m=1$ and $m=2$, respectively for varying $\alpha$ and $f$
given above. In most cases, the computed order is in good agreement with the theoretical rate for both the refinement and uniform time stepping schemes. In contrast, the theoretical rate of the smooth problem (for $f$ given by (a)) would be $2m$ if \eqref{discretepih} held. The results in Table  \ref{table2} suggests that \eqref{discretepih} does not hold uniformly for $\beta=4-\alpha$. In all of the above examples, the error observed for the refinement scheme as a function of the number of solves was below that of the uniform time stepping scheme.  

\begin{table}[h!]
	\centering
	\caption{Approximate order vs. theoretical convergence rates, $m=2$.}
	\label{table2}
	
	\begin{tabular}{|c|ccc| lll |}
		\hline
		&\multicolumn{3}{c|}{GRM scheme.}&
		\multicolumn{3}{c|}{UM scheme.}\\
		\hline
		$\alpha$&0.1&0.5&0.9&0.1&0.5&0.9\\
		\hline
		$f$ by (a)&4.00(4)&3.98(4)&3.97(4)&2.63(4)&2.87(4)&2.96(4)\\
		$f$ by (b)&3.97(4)&3.61(4)&3.87(4)&1.35(1.35)&1.75(1.75)&2.01(2.15)\\
		$f$ by (c)&4.00(4)&3.87(4)&3.88(4)&0.85(0.85)&1.25(1.25)&1.65(1.65)\\
		$f$ by (d)&4.00(4)&3.98(4)&4.00(4)&0.42(0.35)&0.79(0.75)&1.17(1.15)\\
		\hline
	\end{tabular}
\end{table}

Note that the convergence of the GRM schemes is more robust than that of
the UM schemes. The GRM schemes always yield $2m$'th order convergence
while the convergence rate of UM schemes are related to the parameter 
$\alpha$ and the the (discrete) regularity of the initial data $v$ as
suggested by the theory.  The advantages of the refinement scheme are
especially evident for the non-smooth initial data problem.

\subsection{A spatial refinement example}

The last one dimensional example is for $f=1$ but uses a sequence of 
refined spatial grids. By  \eqref{FEM-error}, the semi-discrete error 
for an unrefined mesh is $O(h^{2s})$ for $s<1/4+\alpha$.  As the
singular behavior is at 
the endpoints of the interval, it is natural to use refinement there
to try to improve the error behavior.  We consider a mesh
resulting from a geometric refinement near 0 and 1 
similar to the geometric time stepping
refinement at 0. Specifically,  our meshs on $[0,1/2]$ are constructed
by restricting the mesh of Subsection~3.1 to [0,1/2]  as a function of
$N$,  the number of points per interval. In this
construction,  we choose   $L$ 
so that $2^{-L}\le
h^{-2}$ where $h=1/4N$ is the mesh size on [1/4,1/2].   The mesh on
[1/2,1] is obtained by reflecting the mesh on [0,1/2] about 1/2. The
number of mesh points in space is $O(h^{-1} \hbox{log} (1/h))$.

Table \ref{table3} 
reports errors using the GRM and UM
time stepping schemes applied to the  case when $\calA_h$ comes from a
sequence of refined spatial meshes as discussed above.   For brevity, we only report
results for $\alpha=0.5$. 
For each spatial mesh, we compute an accurate approximation
$u_{h,ref}$ to the
semi-discrete solution $u_h= \calA_h^{-\alpha} \pi_h f$
by using a highly refined (in time)
4'th order GRM time-stepping scheme.   We then report the
semi-discrete error norm 
$e_{semi}:=\|I_h(u) -  u_{h,ref}\|$ where $I_h$ denotes
the finite element interpolation operator on the refined spatial mesh.   The solution $u$ is
computed at the nodes
by using 800000 terms in its Fourier series expansion.
The error $e_{semi}$ is important as it gives us an idea how small we
need to make the time stepping error so that the overall error is, for
example, less than or equal to $2e_{semi}$.
In Table \ref{table3},  
$nx$ is  the number of intervals in the
spatially refined  grid,    $NS$ is the number of time steps
used to reduce  the GRM error
$E_{GRM}:=\|u_{GRM}-u_{h,ref}\|$
below $e_{semi}$
and $E_{UM}:=\|u_N-u_{h,ref}\|$ is the UM error for $N=10^5$ time steps.
It is  clear that the uniform time stepping method is
 inefficient for this problem. Indeed,  in many cases,  
the uniform time stepping fails to reduce the error 
below  $e_{semi}$ 
even when using $10^5$  time steps.

\begin{table}[h!]
	\footnotesize
	\centering
	\caption{Error and the number of steps for the local refinement is space.} %
	\label{table3}	
	\begin{tabular}{|c|c|c|c|c|c|c|c|c|}
		\hline
		\multirow{2}{*} {$N$}& \multirow{2} {*} {$nx$}&\multirow{2}{*}{$e_{semi}$}
		&\multicolumn{3}{c|}{ $m=1$}& \multicolumn{3}{c|} {$m=2$}\\
		\cline{4-9}
		&&&$E_{GRM}$&$NS$&$E_{UM}$&$E_{GRM}$&$NS$&$E_{UM}$\\
		\hline
		4&72&$6.85\times10^{-4}$&$2.24\times10^{-4}$&$92$&$2.21\times10^{-5}$
		&$7.78\times10^{-5}$&$23$&$5.47\times10^{-6}$\\
		\hline
		8&176&$1.71\times10^{-4}$&$5.62\times10^{-5}$&$232$&$2.75\times10^{-5}$
		&$7.75\times10^{-5}$&$29$&$1.44\times10^{-5}$\\
		\hline
		16&416&$4.29\times10^{-5}$&$1.41\times10^{-5}$&$560$&$2.77\times10^{-5}$
		&$6.14\times10^{-6}$&$70$&$1.47\times10^{-5}$\\
		\hline
		32&960&$1.07\times10^{-5}$&$3.52\times10^{-6}$&$1312$&$2.77\times10^{-5}$
		&$6.14\times10^{-6}$&$82$&$1.47\times10^{-5}$\\
		\hline
		64&2176&$2.68\times10^{-6}$&$8.79\times10^{-7}$&$3008$&$2.78\times10^{-5}$
		&$4.15\times10^{-7}$&$188$&$1.47\times10^{-5}$\\
		\hline
		128&4864&$6.71\times 10^{-7}$&$2.19\times10^{-7}$&$6784$&$2.77\times10^{-5}$
		&$4.15\times10^{-7}$&$212$&$1.47\times10^{-5}$\\
		\hline
	\end{tabular}
\end{table}

\subsection{Two dimensional examples: $\Omega=(0,1)^2$}
In the two dimensional case, we consider $f$ given by: 
\begin{enumerate}[(a)]
	\setcounter{enumi}{4}
	\item $f(x,y)= x(1-x)y(1-y)$   so that   $f\in \dH^s$ for $s<5/2$;
	\item $f(x,y)=\left \{ 
	\begin{matrix}
	1& \quad 0.25\le x,y\le 0.75,\\
	0& \quad \mbox{otherwise,}
	\end{matrix} \right.$ 
	\, \, so that $f\in \dH^s$ for $s< 1/2$.
\end{enumerate}

For brevity, we only report results for the case of $m=2$.  For all runs, we use a uniform mesh in space of size 
$h=1/100$ and $L=14$ in the GRM case. We report  the relative error of the time stepping solution compared with
$u_h=\mathcal{A}_h^{-\alpha} \pi_h f$. Table~\ref{Tab-UM-2d-2} gives the  errors as a function of $\alpha$ 
and $NS$, the number of time steps, for the uniform stepping approximation while those of  Table \ref{Tab-GRM-2d-2} are for the 
geometric stepping approximation.  Similar to 1-D case, the reported convergence rates are obtained by \eqref{order}
using $n:=NS=8\times 15$ with the theoretical rates in parenthesis.

\begin{table}
	\footnotesize
	\centering
	\caption{The error
		$\|U_{N}-u_{h}\|_{L^2(\Omega)}/\|u_{h}\|_{L^2(\Omega)}$
                for the fourth order UM scheme.}
	\label{Tab-UM-2d-2}
	\begin{tabular}{|c|c|cccccc|c|}
		\hline
		Ex.& $\alpha\backslash NS $  &$15$ &$2\times15$
          &$4\times15$ & $8\times15$ & $16\times15$ &$32\times 15$ &conv. rate \\
		\hline
		\multirow{5}{*}{(e)}&$0.1$& 3.10e-05 &1.24e-05 &4.98e-06 &2.00e-06 &7.87e-07 &2.87e-07 &1.31(1.35)\\
		&$0.3$& 3.62e-05 &1.26e-05 &4.39e-06 &1.54e-06 &5.35e-07 &1.76e-07 &1.52(1.55)\\
		&$0.5$& 2.19e-05 &6.60e-06 &2.00e-06 &6.13e-07 &1.87e-07 &5.49e-08 &1.72(1.75)\\
		&$0.7$& 9.39e-06 &2.47e-06 &6.51e-07 &1.73e-07 &4.62e-08 &1.20e-08 &1.92(1.95)\\
		&$0.9$& 2.06e-06 &4.71e-07 &1.08e-07 &2.50e-08 &5.83e-09 &1.44e-09 &2.12(2.15)\\
		\cline{1-9}
		\multirow{5}{*}{(f)}&$0.1$& 1.85e-02 &1.23e-02 &7.70e-03 &4.48e-03 &2.34e-03 &1.05e-03 &0.67(0.35)\\
		&$0.3$& 1.81e-02 &1.08e-02 &6.13e-03 &3.25e-03 &1.56e-03 &6.54e-04 &0.82(0.55)\\
		&$0.5$& 8.97e-03 &4.78e-03 &2.44e-03 &1.17e-03 &5.11e-04 &1.97e-04 &0.97(0.75)\\
		&$0.7$& 3.18e-03 &1.50e-03 &6.81e-04 &2.93e-04 &1.16e-04 &4.07e-05 &1.14(0.95)\\
		&$0.9$& 5.77e-04 &2.41e-04 &9.69e-05 &3.71e-05 &1.32e-05 &4.21e-06 &1.31(1.15)\\
		\hline
	\end{tabular}
\end{table}

\begin{table}
	\footnotesize
	\centering
	\caption{The error
		$\|U_{L+1}-u_{h}\|_{L^2(\Omega)}/\|u_{h}\|_{L^2(\Omega)}$
                for the fourth order GRM scheme.}
	\label{Tab-GRM-2d-2}
	\begin{tabular}{|c|c|cccccc|c|}
		\hline
		Ex.& $\alpha\backslash NS $  &$15$ &$2\times15$ &$4\times15$ & $8\times15$ & $16\times15$ &$32\times15$ & conv. rate \\
		\hline
		\multirow{5}{*}{(e)}&$0.1$& 3.67e-06 &2.79e-07 &1.86e-08 &1.20e-09 &1.01e-10 &4.33e-11 &3.91(4)\\\
		&$0.3$& 7.23e-06 &5.42e-07 &3.59e-08 &2.29e-09 &1.47e-10 &1.48e-11 &3.91(4)\\
		&$0.5$& 7.29e-06 &5.35e-07 &3.52e-08 &2.20e-09 &1.24e-10 &6.53e-11 &3.92(4)\\
		&$0.7$& 5.25e-06 &3.76e-07 &2.46e-08 &1.53e-09 &8.13e-11 &3.39e-11 &3.94(4)\\
		&$0.9$& 1.96e-06 &1.36e-07 &8.74e-09 &4.61e-10 &1.33e-10 &1.64e-10 &3.96(4)\\
		\cline{1-9}
		\multirow{5}{*}{(f)}&$0.1$& 9.86e-05 &7.99e-06 &5.46e-07 &3.50e-08 &2.20e-09 &1.45e-10 &3.87(4)\\
		&$0.3$& 1.27e-04 &1.01e-05 &6.86e-07 &4.39e-08 &2.76e-09 &1.74e-10 &3.88(4)\\
		&$0.5$& 9.10e-05 &7.16e-06 &4.83e-07 &3.08e-08 &1.93e-09 &1.34e-10 &3.89(4)\\
		&$0.7$& 4.91e-05 &3.80e-06 &2.55e-07 &1.63e-08 &1.02e-09 &7.05e-11 &3.90(4)\\
		&$0.9$& 1.39e-05 &1.06e-06 &7.08e-08 &4.49e-09 &3.06e-10 &1.63e-10 &3.90(4)\\
		\hline
	\end{tabular}
\end{table}

\section{Conclusions}

 We proposed two time-stepping methods based on Pad\'e approximation for solving a special pseudo-parabolic 
 equation  introduced by Vabishchevich for solving equations involving powers of symmetric positive elliptic operators. 
 We consider two schemes that use geometrically refined  and uniform meshes in time.  The scheme that uses geometrically refined mesh has a   
 convergence rate that does not depend on the smoothness of the solution, while the scheme involving
 uniform time-mesh depends crucially on the discrete regularity of the solution. Both, the theoretical estimates and the numerical tests show that the scheme  
 on geometrically refined meshes  is more efficient compared with the uniform time-stepping scheme, especially in the non-smooth data case.

\section*{Acknowledgments}
B.~Duan is supported by China Scholarship Council and the Fundamental Research Funds for the Central Universities of 
Central South University (2016zzts015).  The work of R.~Lazarov was supported in part by  NSF-DMS \#1620318 grant.
 
%

\bibliographystyle{abbrv}
\bibliography{references,Ray_references}
\end{document}